\numberwithin{figure}{section}          %tie figure numbering to
\numberwithin{equation}{section}        %tie equation numbering to
\renewcommand{\Re}{{\mathbb R}}         %the real numbers
\newcommand{\half}{\frac{1}{2}}         %half
\theoremstyle{plain}
\newtheorem{thm}{Theorem}[section]
\newtheorem{lemma}[thm]{Lemma}
\newtheorem{definition}[thm]{Definition}
\title[Extended Applicability of the Symplectic Pontryagin Method]{Extended Applicability of the Symplectic Pontryagin Method}
\subjclass[2000]{34A60, 49M25}
\keywords{Optimal Control, Symplectic Pontryagin, Hamiltonian System,
  Hamilton-Jacobi-Bellman equation, Regularization, Discretization}
\author{Mattias Sandberg}
\address{Centre of Mathematics for Applications (CMA) c/o Dept of
  Mathematics\\
Box 1053 Blindern\\
NO-0316 Oslo\\
NORWAY
}
\email{mattias.sandberg@cma.uio.no}
\begin{document}
\begin{abstract}
The Symplectic Pontryagin method was introduced in a previous
paper. This work shows that this method is applicable under less restrictive
assumptions. Existence of solutions to the Symplectic Pontryagin
scheme are shown to exist without the previous assumption on a bounded
gradient of the discrete dual variable. The convergence proof uses the
representation of solutions to a Hamilton-Jacobi-Bellman equation as
the value function of an associated variation problem. 
\end{abstract}
\maketitle
\tableofcontents
%\listoffigures
\section{Introduction}
The Symplectic Pontryagin method was introduced in
\cite{Sandberg-Szepessy}, as a numerical method designed for
approximation  of the value function associated with   optimal control
problems. Under general conditions this value function, $u:\Re\sp
d\times[0,T]\rightarrow \Re$, is a viscosity
solution of an associated Hamilton-Jacobi equation,
\begin{equation}\label{eq:HJ}
\begin{split}
u_t+H(x,u_x)&=0,\quad\text{in }\Re\sp d\times(0,T), \\
u(x,T)&=g(x),
\end{split}
\end{equation}
where $u_t$ denotes the derivative with respect to the ``time'' variable $t$, and $u_x$ the
gradient with respect to the state variable $x$. 
The function $H:\Re\sp d\times\Re\sp d\rightarrow\Re$ is denoted the
\emph{Hamiltonian}. This function is concave in its second argument
when the Hamilton-Jacobi equation originates in optimal control. We
shall therefore only deal with such concave Hamiltonians in this paper.
The main ingredient
in the Symplectic Pontryagin method is the 
observation that, under favorable conditions,
%the value
%of 
%$u$ at the point with coordinates $(y,s)$ may be obtained  
optimal paths to the control problem solve a Hamiltonian system
associated with the Hamilton-Jacobi equation \eqref{eq:HJ}. More
accurately, if $x:[s,T]\rightarrow\Re\sp d$ is an optimal path for the
optimal control problem with initial position $(y,s)$, then there
exists a \emph{dual} function $\lambda:[s,T]\rightarrow \Re\sp d$,
such that $x$ and $\lambda$ solve
\begin{equation}\label{eq:HamSyst}
\begin{split}
x'(t)&=H_\lambda\big(x(t),\lambda(t)\big), \quad\text{for } s < t <
T,\\
x(s)&=y,\\
-\lambda'(t)&=H_x(x(t),\lambda(t)\big), \quad\text{for } s < t <
T,\\
\lambda(T)&=g'\big(x(T)\big).
\end{split}
\end{equation}
This is the \emph{Pontryagin principle}, a necessary condition for
optimality, in the case where the Hamiltonian function, $H$, is
differentiable. The Hamiltonian is, however, often nondifferentiable,
even in situations where the control and cost functions in the control
problem are differentable, see \cite{Sandberg-Szepessy}. 
Even though the Pontryagin principle may be formulated for 
%a more
%general class of 
other control problems than those having differentiable Hamiltonians, 
the version in \eqref{eq:HamSyst} is appealing as a starting point for
numerical methods. 

In the Symplectic Pontryagin method, the Hamiltonian system
\eqref{eq:HamSyst} is used with a regularized Hamiltonian,
$H\sp\delta$,
satisfying $|H\sp\delta(x,\lambda)-H(x,\lambda)|\leq \delta$, for all
$(x,\lambda)\in \Re\sp {2d}$. The Hamiltonian system
\eqref{eq:HamSyst} makes sense when $H$ has been replaced with the
differentiable $H\sp\delta$. The comparison principle for
Hamilton-Jacobi equations then grants that the maximum difference
between the solution to the original Hamilton-Jacobi equation
\eqref{eq:HJ} and the one where  $H\sp\delta$ has taken the place of
$H$ is of the order $\delta$.

The second ingredient in the Symplectic Pontryagin method is the
application of the \emph{Symplectic Euler} numerical scheme for the
Hamiltonian system \eqref{eq:HamSyst} with the regularized Hamiltonian
$H\sp\delta$. We shall for simplicity assume that an approximation of
$u(x_s,0)$ is to be computed. The method to approximate $u$ for a
starting position whose time coordinate differs from zero, is similar.
The time interval $[0,T]$ is split into $N$ intervals of length 
$\Delta t=T/N$. We introduce the notation $t_n=n\Delta t$, for
$n=0,\ldots,N$. The Symplectic Pontryagin scheme is:
\begin{equation}\label{eq:SymplecticPontryagin}
\begin{split}
x_{n+1}&=x_n+\Delta t H\sp\delta_\lambda(x_n,\lambda_{n+1}),\quad
n=0,\ldots,N-1 \\
x_0&=x_s,\\
\lambda_n&=\lambda_{n+1}+\Delta t H\sp\delta(x_n,\lambda_{n+1}),\quad n=0,\ldots,N-1 \\
\lambda_N&=g'(x_N).
\end{split}
\end{equation} 

In \cite{Sandberg-Szepessy} this method is analyzed by extending the
solutions $\{x_n\}$ to piecewise linear functions. These functions are
used to define an approximate value function which is shown to solve
a Hamilton-Jacobi equation equal to the initial equation, but with an
additional error term. The above mentioned comparison principle gives
the difference between the approximate and the exact value functions.

In this paper the Symplectic Pontryagin method is analyzed in a
different way. The first step is here to consider the Hamilton-Jacobi
equations with the original Hamiltonian $H$, and the regularized
variant $H\sp\delta$. As mentioned above, this gives a difference of
the order $\delta$ between the corresponding solutions. Next, a
representation formula of the solutions to the Hamilton-Jacobi
equations as minima of a variation problem is used.  It is shown that
there exists one 
solution to the Symplectic Pontryagin method which is also a minimizer to
an Euler  discretized version of the variation problem. The minimizer
to the dicretized variation problem is then shown to give a value
which is close to the value of the original variation problem.

This work extends the result in \cite{Sandberg-Szepessy} in the
following ways:
\begin{itemize}
\item A solution to the Symplectic Pontryagin method is shown to exist
%  under conditions on the Hamiltonian which correspond to optimal
%  control problems with bounded fluxes and 
   for the considered class of Hamiltonians.
\item The result in \cite{Sandberg-Szepessy} relies on the assumption
  that the variation $\partial\lambda_{n+1}/\partial x_n$ is bounded
  everywhere but on a codimension one hypersurface. This assumption is
  not needed with the present approach.
\item The error bound in the present paper is shown to be of the order
  $\delta + \Delta t$, compared with the previous $\delta+\Delta
  t+\Delta t\sp 2 /\delta$. The new result therefore ensures the
  possibility of decreasing $\delta$ independently of $\Delta t$,
  without deteriorating the error estimate.\end{itemize}

The paper is organized as follows. In section \ref{sec:Main} we present the main
results of this paper, existence  of solutions to, and convergence of,
the Symplecitic Pontryagin method. In sections \ref{sec:low} and
\ref{sec:low2} the convergence proof is given in the form of a series
of lemmas. 
\section{Main Results}\label{sec:Main}
Given the Hamiltonian function $H:\Re\sp d\times\Re\sp d\rightarrow\Re$ we define
the \emph{running cost} function
\begin{equation}\label{eq:LegendreL}
L(x,\alpha)=\sup_{\lambda\in\Re\sp d}
\big\{-\alpha\cdot\lambda+H(x,\lambda)\big\},
\end{equation}
for all $x$ and $\alpha$ in $\Re\sp d$.
This function is convex in its second argument, and extended valued,
i.e.\ its values belong to $\Re \cup \{+\infty\}$. If the Hamiltonian
is real-valued and concave in its second variable it is possible to
get it back when having possession of L:
\begin{equation}\label{eq:LegendreH}
H(x,\lambda)=\inf_{\alpha\in\Re\sp d} \big\{\lambda\cdot\alpha+L(x,\alpha)\big\}.
\end{equation}
This is a consequence of the bijectivity of the Legendre-Fenchel
transform, see \cite{Rockafellar}.

For Hamilton-Jacobi equations with initial data given (Cauchy
problems, i.e.\ not as here with data given at time $T$) and convex
Hamiltonians, the Hamiltonian and the running cost are connected via
the usual Legendre-Fenchel transform. The results presented here could
have been presented for  Hamilton-Jacobi equations in this form,
but since it is more common for  optimal control problems  to include
cost functions of the terminal positions than the initial positions,
the current setting is chosen.

The cornerstone in the convergence analysis for the Symplectic
Pontryagin method will be the following representation theorem, taken
from \cite{Galbraith}, see also 
\cite{Rockafellar,Cannarsa-Sinestrari,Ishii88,Stromberg07}. The result is given in greater generality in
that paper, but we present a form suited for our purposes. 
We will assume that the Hamiltonian satisfies the following estimates:
\begin{equation}\label{eq:Hamiltonianestimates}
\begin{split}
|H(x,\lambda_1)-H(x,\lambda_2)|&\leq C_1|\lambda_1-\lambda_2|,\quad
 \text{for all }x,\lambda_1,\lambda_2 \in \Re\sp d, \\
|H(x_1,\lambda)-H(x_2,\lambda)|&\leq C_2|x_1-x_2|(1+|\lambda|),\quad
 \text{for all }x_1,x_2, \lambda \in\Re\sp d.
\end{split}
\end{equation}
%and that the terminal cost function $g:\Re\sp d\rightarrow\Re$
Then the following representation result holds:
\begin{thm}\label{thm:representation}
Let the Hamiltonian $H:\Re\sp d\times\Re\sp d\rightarrow \Re$ be
concave in its second argument, and satisfy the bounds in 
\eqref{eq:Hamiltonianestimates}. 
Let the running cost $L$ be defined by \eqref{eq:LegendreL}, and 
let $g:\Re\sp d\rightarrow \Re$ be a
continuous function such that $g(x) \geq -k(1+|x|)$ for all
$x\in\Re\sp d$ for some $k > 0$. 
%Then there exists a viscosity
%solution $u:\Re\sp d\times[0,T]\rightarrow\Re$ to 
Then the value function
\begin{multline}\label{eq:V}
V(y,s)=\inf\Big(\int_s\sp T L\big(x(t),x'(t)\big)dt + g\big(x(T)\big)\ 
\big|\\ 
x:[s,T]\rightarrow\Re\sp d\text{ absolutely continuous,
}x(s)=y\Big)
\end{multline}
is a continuous viscosity solution to \eqref{eq:HJ} which satisfies
\begin{equation}\label{eq:existbound}
u(x,t) \geq -k(1+|x|)\quad \text{for all } (x,t)\in\Re\sp d\times[0,T].
\end{equation}

Furthermore, if a function is a continuous viscosity solution to
\eqref{eq:HJ} which satisfies \eqref{eq:existbound}, then this
function must be the value function $V$.
\end{thm}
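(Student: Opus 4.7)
The plan is to establish the theorem in two phases: first, show that the value function $V$ defined by \eqref{eq:V} is a continuous viscosity solution of \eqref{eq:HJ} satisfying \eqref{eq:existbound}; second, derive uniqueness from a comparison principle for viscosity solutions obeying the prescribed growth.

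Before doing either I would record some structural consequences of \eqref{eq:Hamiltonianestimates} for the running cost $L$. Because $H$ is $C_1$-Lipschitz in $\lambda$, the dual $L(x,\alpha)$ is $+\infty$ as soon as $|\alpha|>C_1$: letting $\lambda$ tend to infinity along $-\alpha$ in \eqref{eq:LegendreL} shows the supremum is unbounded. Hence only admissible paths with $|x'(t)|\leq C_1$ almost everywhere contribute to the infimum in \eqref{eq:V}, which gives a uniformly equi-Lipschitz competitor class and the compactness that will be needed for the dynamic programming argument. Choosing $\lambda=0$ in \eqref{eq:LegendreL} yields the pointwise bound $L(x,\alpha)\geq H(x,0)$, and combining this with the Lipschitz bound $H(x,0)\geq H(0,0)-C_2|x|$ produces a linear lower bound on $L$ along any admissible trajectory. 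Together with the growth hypothesis on $g$ and the fact that trajectories satisfy $|x(t)|\leq |y|+C_1(T-s)$, this yields both the upper estimate $V(y,s)<\infty$ (using the constant path $x\equiv y$) and the lower bound \eqref{eq:existbound}, after enlarging $k$ if necessary.

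Next I would prove the dynamic programming principle
\[
V(y,s)=\inf\Bigl\{\int_s\sp{s+h}L(x(t),x'(t))\,dt+V(x(s+h),s+h)\Bigr\},
\]
with the infimum over absolutely continuous paths with $x(s)=y$. The proof is the classical splitting argument: concatenating an almost optimal path on $[s,s+h]$ with an almost optimal path on $[s+h,T]$ gives one inequality, and restricting an almost optimal path on $[s,T]$ to the two subintervals gives the other. From the DPP, continuity of $V$ follows from the uniform velocity bound and the Lipschitz structure of $L$ in $x$. A standard test-function argument then yields the viscosity subsolution and supersolution inequalities: for any smooth $\varphi$ touching $V$ at $(y,s)$ from above or below, one uses admissible velocities $\alpha$ close to the argmin in $H(y,\varphi_x(y,s))=\inf_\alpha\{\varphi_x(y,s)\cdot\alpha+L(y,\alpha)\}$ and passes to the limit as $h\downarrow 0$; the terminal condition $V(x,T)=g(x)$ follows from the constant path as $s\to T$.

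Finally, uniqueness within the class of continuous solutions satisfying \eqref{eq:existbound} follows from the standard comparison principle for Hamilton-Jacobi equations whose Hamiltonian obeys the Lipschitz bounds \eqref{eq:Hamiltonianestimates}; the at-most-linear growth in $|x|$ is handled by doubling of variables with an auxiliary penalization such as $\epsilon\sqrt{1+|x|\sp 2}$. Comparing any continuous viscosity solution $u$ satisfying \eqref{eq:existbound} with $V$ in both directions gives $u\equiv V$. The main technical obstacle I expect is the extended-valued and merely lower semicontinuous nature of $L$: one must ensure that the compactness used in the DPP (Arzel\`a--Ascoli on the $C_1$-Lipschitz competitor class) is compatible with passing to the limit in the cost functional, so that approximating minimizers actually produce valid limits and so that $V$ inherits continuity rather than only semicontinuity.
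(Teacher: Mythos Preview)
The paper does not prove Theorem~\ref{thm:representation}; it is quoted from \cite{Galbraith} (with pointers to \cite{Rockafellar,Cannarsa-Sinestrari,Ishii88,Stromberg07}) and used as a black box. So there is no in-paper argument to compare your proposal against.

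That said, your outline is exactly the classical route one finds in those references: (i) extract from \eqref{eq:Hamiltonianestimates} that $L(x,\alpha)=+\infty$ for $|\alpha|>C_1$, giving an equi-Lipschitz competitor class and finiteness/lower-bound estimates for $V$; (ii) establish the dynamic programming principle by concatenation/restriction of near-optimal arcs; (iii) derive the sub- and supersolution inequalities from the DPP by test-function expansion and the Legendre duality \eqref{eq:LegendreH}; (iv) obtain uniqueness from a comparison principle for Hamiltonians satisfying \eqref{eq:Hamiltonianestimates}, with a linear-growth penalization to control behavior at infinity. This is precisely the machinery in, e.g., \cite{Cannarsa-Sinestrari} and \cite{Ishii88}.

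Two small points to watch if you flesh this out. First, the bound \eqref{eq:existbound} as literally written uses the same constant $k$ as in the hypothesis on $g$; your computation correctly produces only a linear lower bound with a possibly larger constant, so either read the theorem as ``for some $k$'' or note the enlargement explicitly. Second, in the supersolution half of step (iii) you need $L$ to be lower semicontinuous jointly in $(x,\alpha)$ (not just in $\alpha$) so that limits of near-optimal arcs do not lose cost; this follows from \eqref{eq:LegendreL} as a supremum of continuous functions, but it should be stated since you rely on it for the Arzel\`a--Ascoli step.
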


We also have the following result, stating that optimal solutions to
the value function $V$ solve a Hamiltonian system. For a proof, see
\cite{Clarke}.
\begin{thm}\label{thm:optsolsolvehamsyst}
Let the conditions in Theorem \ref{thm:representation} be
satisfied, and let $(y,s)$ be any point in $\Re\sp d \times[0,T]$. 
Then a minimizer $x:[s,T]\rightarrow\Re\sp d$ for $V(y,s)$ in
\eqref{eq:V} exists. If furthermore the Hamiltonian $H$ and the
terminal cost $g$ are
continuously differentiable, there exists a dual function
$\lambda:[s,T]\rightarrow\Re\sp d$, such that $(x,\lambda)$ solve the
Hamiltonian system \eqref{eq:HamSyst}  
\end{thm}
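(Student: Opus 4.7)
The plan is to split the statement into two halves: existence of a minimizer via the direct method of the calculus of variations, and construction of the dual path by converting the Euler--Lagrange equation for $L$ into the Hamiltonian system for $H$ through the Legendre--Fenchel conjugacy.

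For existence, the key observation is that the Lipschitz estimate on $H$ in $\lambda$ from \eqref{eq:Hamiltonianestimates} forces $L(x,\alpha)=+\infty$ whenever $|\alpha|>C_1$. Indeed, substituting $\lambda=t\alpha/|\alpha|$ into the supremum \eqref{eq:LegendreL} yields the lower bound $t(|\alpha|-C_1)+H(x,0)$, which diverges as $t\to\infty$. Consequently every competitor in \eqref{eq:V} with finite cost is automatically $C_1$-Lipschitz, so a minimizing sequence $\{x_n\}$ is uniformly bounded and equicontinuous on $[s,T]$. Arzel\`a--Ascoli extracts a uniformly convergent subsequence whose derivatives converge weakly-$*$ in $L\sp\infty$, and Tonelli's classical lower semicontinuity theorem (using convexity of $L$ in its second argument) together with continuity of $g$ shows the limit is a minimizer. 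The growth bound $g(x)\geq -k(1+|x|)$ combined with the velocity envelope supplies a finite lower bound on $V(y,s)$, so the infimum is attained.

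For the Hamiltonian system I would write down the Euler--Lagrange equation together with the natural boundary condition produced by taking variations $\eta$ with $\eta(s)=0$ but $\eta(T)$ free:
\[
\frac{d}{dt}L_\alpha\bigl(x(t),x'(t)\bigr)=L_x\bigl(x(t),x'(t)\bigr),\qquad L_\alpha\bigl(x(T),x'(T)\bigr)=-g'\bigl(x(T)\bigr).
\]
Setting $\lambda(t):=-L_\alpha(x(t),x'(t))$ immediately yields $-\lambda'(t)=L_x(x,x')$ and $\lambda(T)=g'(x(T))$. The Legendre--Fenchel relations between $H$ and $L$, namely $\alpha=H_\lambda(x,\lambda)\iff \lambda=-L_\alpha(x,\alpha)$ together with the envelope identity $L_x(x,\alpha)=H_x(x,\lambda)$ at a conjugate pair, then translate these identities into \eqref{eq:HamSyst}.

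The principal difficulty is that $L$ is extended-valued and, even with $H\in C\sp1$, need not be differentiable in $\alpha$: the supremum in \eqref{eq:LegendreL} can be attained on an entire affine set as soon as $H(x,\cdot)$ fails to be strictly concave, so only a subdifferential $\partial_\alpha L(x,\alpha)$ is generally available and the Euler--Lagrange relation must be read in Clarke's generalized sense. The delicate task is to produce a measurable, in fact absolutely continuous, selection $\lambda(t)\in -\partial_\alpha L(x(t),x'(t))$ that satisfies the adjoint inclusion and the transversality condition simultaneously. This is precisely what Clarke's nonsmooth maximum principle supplies (cf.\ \cite{Clarke}); smoothness of $H$ and $g$ then upgrades the resulting inclusions to the pointwise equalities appearing in \eqref{eq:HamSyst}, completing the identification.
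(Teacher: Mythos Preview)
The paper does not actually supply its own proof of this theorem; immediately after the statement it simply writes ``For a proof, see \cite{Clarke}.'' Your outline---existence via the direct method using the velocity constraint $|\alpha|\leq C_1$ that falls out of the Lipschitz bound on $H$, followed by the Euler--Lagrange/transversality relations converted to \eqref{eq:HamSyst} through the conjugacy of $H$ and $L$, with Clarke's nonsmooth maximum principle handling the possible nondifferentiability of $L$---is precisely the route one finds in that reference and is correct in substance.

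One small slip: in your coercivity calculation the test vector should be $\lambda=-t\alpha/|\alpha|$ rather than $\lambda=t\alpha/|\alpha|$. With the minus sign one gets $-\alpha\cdot\lambda=t|\alpha|$ and $H(x,\lambda)\geq H(x,0)-C_1 t$, hence the claimed lower bound $t(|\alpha|-C_1)+H(x,0)$; with your sign the expression is bounded above rather than below and does not diverge to $+\infty$.
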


In view of Theorem \ref{thm:representation} a possible approximate
value of $u(x_s,t_m)$ is the minimizer of
\begin{equation}\label{eq:J}
J_{(x_s,t_m)}(\alpha_m,\ldots,\alpha_{N-1}) =
\Delta t\sum_{n=m}\sp{N-1} L(x_n,\alpha_n) +g(x_N)
\end{equation}
%over
%\begin{equation*}
%\{\alpha_n\}_{n=0}\sp{N-1}
%\end{equation*}
where $\alpha_n\in \Re\sp d$ and 
\begin{equation}\label{eq:xalpharel}
\begin{split}
x_{n+1} &= x_n +\Delta t\alpha_n,\quad\text{for }m \leq n\leq N-1,\\
x_m&=x_s.
\end{split}
\end{equation}
%for
%$0\leq n\leq N-1$, and $x_0=x_s$.
We define a discrete value function as the optimal value of $J$:
\begin{equation*}
\bar u(x_s,t_m)=\inf\big\{J_{(x_s,t_m)}(\alpha_m,\ldots,\alpha_{N-1})\big\|\ \alpha_m,\ldots,\alpha_{N-1}\in\Re\sp d\big\}
\end{equation*}

For the proof of  Theorem \ref{thm:OptimEquivSP}, the existence
theorem, we need two definitions.
\begin{definition}\label{def:semiconcavity} 
A function $f:\Re\sp d\rightarrow\Re$ is \emph{locally semiconcave} if
for every compact convex set $V \subset \Re\sp d$ there exists a
constant $K>0$ such that $f(x)-K|x|\sp 2$ is concave on $V$.
\end{definition}
There exist alternative definitions of semiconcavity, see
\cite{Cannarsa-Sinestrari}, but this is the one used in this paper.
\begin{definition}\label{def:superdifferential}
An element $p\in\Re\sp d$ belongs to the \emph{superdifferential} of
the function $f:\Re\sp d\rightarrow\Re$ at $x$, denoted $D\sp+ f(x)$, if 
\begin{equation*}
\limsup_{y\rightarrow x} \frac{f(y)-f(x)-p\cdot(y-x)}{|y-x|} \leq 0.
\end{equation*}
\end{definition}

\begin{thm}\label{thm:OptimEquivSP}
Let $x_s$ be any element in $\Re\sp d$, and $g:\Re\sp d\rightarrow\Re$
a continuously differentiable function such that $|g'(x)|\leq L_g$,
for all $x\in\Re\sp d$, and some constant $L_g > 0$.
Let the Hamiltonian $H:\Re\sp d\times\Re\sp d\rightarrow \Re$ be two times
continuously differentiable and concave in its second argument, and satisfy the bounds in
\eqref{eq:Hamiltonianestimates}. Let the running cost $L$ be defined
by
\eqref{eq:LegendreL}. Then there exists a minimizer
$(\alpha_m,\ldots,\alpha_{N-1})$ of the function $J_{(x_s,t_m)}$ in \eqref{eq:J}. Let
$(x_m,\ldots,x_N)$ be a corresponding solution to \eqref{eq:xalpharel}.
%Let $\{X_n\}, \{\alpha_n\}$ be an optimal solution (state and control)
%to \eqref{}. 
Then there exists a discrete dual variable
$\lambda_n,\ n=m,\ldots,N$ satisfying 
\begin{equation}\label{eq:SymplPontr}
\begin{split}
x_{n+1}&=x_n+\Delta t H_\lambda(\lambda_{n+1},x_n), \quad\text{for all
} m \leq n\leq N-1,\\
x_m&=x_s\\
\lambda_n &= \lambda_{n+1}+\Delta t
H_x(\lambda_{n+1},x_n),\quad\text{for all }m \leq n\leq N-1,\\
\lambda_N&=g'(x_N).
\end{split}
\end{equation}
Hence $\alpha_n=H_\lambda(\lambda_{n+1},X_n)$ for all $m \leq n\leq N-1$.
\end{thm}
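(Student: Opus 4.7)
The $C_1$-Lipschitz bound on $H(x,\cdot)$ from \eqref{eq:Hamiltonianestimates} forces $L(x,\alpha)=+\infty$ as soon as $|\alpha|>C_1$, since along $\lambda=-t\alpha$ the quantity $-\alpha\cdot\lambda+H(x,\lambda)$ grows at least like $t(|\alpha|^2-C_1|\alpha|)$. A minimizing sequence for $J_{(x_s,t_m)}$ therefore stays inside the compact box $\{|\alpha_n|\le C_1\}$; the choice $\alpha_n=H_\lambda(x_n,0)$ gives a feasible path with $L(x_n,\alpha_n)=H(x_n,0)$ finite, and $J$ is lower semicontinuous because $L$ is a supremum of jointly continuous functions and $g$ is continuous. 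Compactness plus lower semicontinuity produces a minimizer.

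\textbf{Backward induction for $\lambda_n$.} Let $\bar u_n$ denote the discrete value function starting from $(x,t_n)$; by dynamic programming,
\[\bar u_n(x)=\inf_\alpha\bigl\{\Delta t\,L(x,\alpha)+\bar u_{n+1}(x+\Delta t\,\alpha)\bigr\},\qquad \bar u_N=g.\]
The plan is to prove inductively, for $n=N,N-1,\ldots,m$, that the scheme \eqref{eq:SymplPontr} is satisfied up to level $n$ and that $\lambda_n\in D\sp+\bar u_n(x_n)$. The base case $n=N$ is immediate because $g\in C^1$ forces $D\sp+\bar u_N(x_N)=\{g'(x_N)\}$, matching the terminal condition $\lambda_N=g'(x_N)$.

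\textbf{Inductive step.} Given $\lambda_{n+1}\in D\sp+\bar u_{n+1}(x_{n+1})$, perturbing $\alpha_n$ in the DP equation at $x_n$ and using the superdifferential bound on $\bar u_{n+1}$ yields
\[L(x_n,\alpha)\ge L(x_n,\alpha_n)-\lambda_{n+1}\cdot(\alpha-\alpha_n)-o(|\alpha-\alpha_n|)\]
for all $\alpha$ near $\alpha_n$. Since $L(x_n,\cdot)$ is convex, a standard argument absorbs the $o$-term and we obtain $-\lambda_{n+1}\in\partial_\alpha L(x_n,\alpha_n)$; Fenchel--Young duality applied to \eqref{eq:LegendreL} (using that $H$ is smooth and concave in $\lambda$) then gives $\alpha_n=H_\lambda(x_n,\lambda_{n+1})$ and identifies $\lambda_{n+1}$ as a maximizer in the supremum defining $L(x_n,\alpha_n)$. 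Next, perturbing $x_n$ instead, plugging $\alpha_n$ as a test control in the DP formula for $\bar u_n(x_n+h)$, and combining the superdifferential inequality for $\bar u_{n+1}$ at $x_{n+1}+h$ with the one-sided bound $L(x_n+h,\alpha_n)\le L(x_n,\alpha_n)+H_x(x_n,\lambda_{n+1})\cdot h+o(|h|)$ (discussed below) delivers $\lambda_n:=\lambda_{n+1}+\Delta t\,H_x(x_n,\lambda_{n+1})\in D\sp+\bar u_n(x_n)$. This simultaneously supplies the adjoint update of \eqref{eq:SymplPontr} and propagates the induction hypothesis.

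\textbf{Main obstacle.} The delicate piece is the $x$-expansion of $L(\cdot,\alpha_n)$ with the gradient $H_x(x_n,\lambda_{n+1})$ for the specific $\lambda_{n+1}$ prescribed by the backward recursion. Because $H$ is only concave (not strictly) in $\lambda$, the Legendre maximizer at $(x_n,\alpha_n)$ may be non-unique, and a maximizer $\mu\sp{*}_h$ for $L(x_n+h,\alpha_n)$ can drift within the maximizing set as $h\to0$. The trivial inequality $L(x_n+h,\alpha_n)-L(x_n,\alpha_n)\le H(x_n+h,\mu\sp{*}_h)-H(x_n,\mu\sp{*}_h)$ is available (since $\mu\sp{*}_h$ is only suboptimal at $x_n$), but converting it into a Taylor expansion anchored at $\lambda_{n+1}$ requires showing that the relevant set of maximizers is bounded (so that $H\in C^2$ gives uniform second-order control) and a Danskin-type envelope argument placing $H_x(x_n,\lambda_{n+1})$ in the semiconcave superdifferential of $L(\cdot,\alpha_n)$ at $x_n$. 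This is where the $C^2$ hypothesis on $H$ (strengthening the $C^1$ required by Theorem \ref{thm:representation}) together with Definition \ref{def:semiconcavity} and Definition \ref{def:superdifferential} enter essentially.
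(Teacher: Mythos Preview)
Your existence argument and the derivation of $\alpha_n=H_\lambda(x_n,\lambda_{n+1})$ via perturbation in $\alpha$ are fine and match the paper's Steps~1--3 essentially verbatim (the paper also uses convexity of $L(x_n,\cdot)$ to remove the error term, exactly as you do).

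The real problem is in your perturbation in $x$. With the \emph{fixed} test control $\alpha_n$ you need the one-sided bound
\[
L(x_n+h,\alpha_n)\;\le\;L(x_n,\alpha_n)+H_x(x_n,\lambda_{n+1})\cdot h+o(|h|),
\]
but this inequality points the wrong way. Since $L(x,\alpha_n)=\sup_\lambda\{-\alpha_n\cdot\lambda+H(x,\lambda)\}$ is a supremum of $C^2$ functions of $x$, it is semi\emph{convex}, not semiconcave; inserting $\lambda=\lambda_{n+1}$ immediately gives the \emph{lower} bound
\[
L(x_n+h,\alpha_n)-L(x_n,\alpha_n)\;\ge\;H(x_n+h,\lambda_{n+1})-H(x_n,\lambda_{n+1})=H_x(x_n,\lambda_{n+1})\cdot h+O(|h|^2),
\]
i.e.\ $H_x(x_n,\lambda_{n+1})\in D^{-}_xL(x_n,\alpha_n)$, the opposite of what you need. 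A Danskin/envelope argument does not help here: Danskin tells you that the Clarke subdifferential of a max-function is the convex hull of the gradients at maximizers, which again is sub-, not super-, differential information. When the maximizing set for $L(x_n,\alpha_n)$ is not a singleton (which is exactly the situation you worry about), $L(\cdot,\alpha_n)$ can have a genuine convex kink at $x_n$, and for the particular $\lambda_{n+1}$ handed to you by the backward recursion the required upper bound simply fails.

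The paper avoids this entirely by changing the test control from the fixed $\alpha_n$ to the $x$-dependent control $H_\lambda(x,\lambda_{n+1})$. The Fenchel identity then gives the \emph{exact} formula
\[
L\bigl(x,H_\lambda(x,\lambda_{n+1})\bigr)=H(x,\lambda_{n+1})-\lambda_{n+1}\cdot H_\lambda(x,\lambda_{n+1}),
\]
which is a $C^2$ function of $x$, so no envelope theorem is needed and the specific $\lambda_{n+1}$ causes no trouble. Because the new state $x+\Delta t\,H_\lambda(x,\lambda_{n+1})$ depends nonlinearly on $x$, the paper must also carry local \emph{semiconcavity} of $\bar u(\cdot,t_{n+1})$ (a quadratic upper bound, not merely $\lambda_{n+1}\in D^+\bar u_{n+1}$) through the induction; this is the second ingredient you are missing, and it is precisely where the $C^2$ hypothesis on $H$ is used, via the bound $|x+\Delta t H_\lambda(x,\lambda_{n+1})-x_{n+1}|\le K|x-x_n|$.
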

\begin{proof}

\emph{Step 1.} By the properties of the Hamiltonian it follows that
the running cost $L$ is lower semicontinuous, see
\cite{Rockafellar}. It is clear that the infimum of $J_{(x_s,t_m)}$ is
less than infinity. Let $\varepsilon > 0$ and
$\alpha\sp\varepsilon_n$, $n=m,\ldots,N-1$ be discrete controls such that
\begin{multline*}
J_{(x_s,t_m)}(\alpha\sp\varepsilon_m,\ldots,\alpha\sp\varepsilon_{N-1})\\
\leq \inf\big\{J_{(x_s,t_m)}(\alpha_m,\ldots,\alpha_{N-1})\ \big|\
\alpha_n\in\Re\sp d,\ n=m,\ldots,N-1\big\}+\varepsilon.
\end{multline*}
Since the Hamiltonian $H$ satisfies the bounds
\eqref{eq:Hamiltonianestimates}, it follows that all
$\alpha\sp\varepsilon_n$ are bounded  by $C_1$. Hence there exist
$\{\varepsilon_i\}$ and corresponding $\{\alpha_n\sp{\varepsilon_i}\}$ such that
$\varepsilon_i \rightarrow 0$, and $\alpha_n\sp{\varepsilon_i}\rightarrow\alpha_n$,
where $|\alpha_n|\leq C_1$, for all $n$. The lower semicontinuity of
$L$ implies that this $\{\alpha_n\}_{n=m}\sp{N-1}$ is a minimizer for $J_{(x_s,0)}$.

\emph{Step 2.} Assume that $\bar u(\cdot,t_{n+1})$ is locally semiconcave, and
that $\lambda_{n+1}\in D\sp +\bar u(x_{n+1},t_{n+1})$. We will
show that this implies
\begin{equation}\label{eq:HLconnection}
\lambda_{n+1}\cdot\alpha_n + L(x_n,\alpha_n)=H(\lambda_{n+1},x_n).
\end{equation}

By the semiconcavity we have that there exists a constant $A>0$, such that
\begin{equation}\label{eq:onestepvalue}
\bar u(x_n+\Delta t \alpha,t_{n+1}) \leq \bar u(x_{n+1},t_{n+1}) +
\Delta t \lambda_{n+1}\cdot(\alpha-\alpha_n) + A|\alpha-\alpha_n|\sp 2.
\end{equation}
for all $\alpha$ in a neighborhood around $\alpha_n$.
Since we know that the function 
\begin{equation*}
\alpha\mapsto \bar u(x_n+\Delta t \alpha,t_{n+1})+\Delta t L(x_n,\alpha)
\end{equation*}
is minimized for $\alpha=\alpha_n$, the semiconcavity of $\bar u$ in 
\eqref{eq:onestepvalue} implies that  the function 
\begin{equation}\label{eq:funcalmostvalue}
\alpha\mapsto \lambda_{n+1}\cdot\alpha + A|\alpha-\alpha_n|\sp 2 +L(x_n,\alpha)
\end{equation}
is also minimized for $\alpha=\alpha_n$. We will prove that 
the function 
\begin{equation}\label{eq:funcaffine}
\alpha\mapsto \lambda_{n+1}\cdot\alpha + L(x_n,\alpha)
\end{equation}
is minimized for $\alpha=\alpha_n$. Let us assume that this is false,
so that there exists an $\alpha\sp *\in\Re\sp d$, and an $\varepsilon > 0$ such that
\begin{equation}\label{eq:contradictionassumption}
\lambda_{n+1}\cdot\alpha_n+L(x_n,\alpha_n) -
\lambda_{n+1}\cdot\alpha\sp*-L(x_n,\alpha\sp *) \geq \varepsilon.
\end{equation}
Let $\xi\in[0,1]$, and $\hat\alpha=\xi\alpha\sp *
+(1-\xi)\alpha_n$. Insert $\hat \alpha$ into the function in
\eqref{eq:funcalmostvalue}:
\begin{multline*}
\lambda_{n+1}\cdot\hat\alpha+A|\hat\alpha-\alpha_n|\sp
2+L(x_n,\hat\alpha)\\
=
\xi\lambda_{n+1}\cdot\alpha\sp*+(1-\xi)\lambda_{n+1}\cdot\alpha_n+A\xi\sp
2|\alpha\sp*-\alpha_n|\sp 2+L(x_n,\xi\alpha\sp *+(1-\xi)\alpha_n)\\ 
\leq \xi\lambda_{n+1}\cdot\alpha\sp*+(1-\xi)\lambda_{n+1}\cdot\alpha_n+A\xi\sp
2|\alpha\sp*-\alpha_n|\sp 2 + \xi L(x_n,\alpha\sp *)
+(1-\xi)L(x_n,\alpha_n)\\
\leq \lambda_{n+1}\cdot\alpha_n + L(x_n,\alpha_n) + A\xi\sp
2|\alpha\sp*-\alpha_n|\sp 2 -\xi\varepsilon \\
<\lambda_{n+1}\cdot\alpha_n + L(x_n,\alpha_n),
\end{multline*}
for some small positive  number $\xi$. This contradicts the fact that
$\alpha_n$ is a minimizer to the function in
\eqref{eq:funcalmostvalue}. Hence we have shown that the function in
\eqref{eq:funcaffine} is minimized at $\alpha_n$. By the relation
between $L$ and $H$ in \eqref{eq:LegendreH} our claim
\eqref{eq:HLconnection} follows.

\emph{Step 3.} From the result in Step 2, equation
\eqref{eq:HLconnection}, and the definition of the running cost $L$ in
\eqref{eq:LegendreL} it follows that
$\alpha_n=H_\lambda(x_n,\lambda_{n+1})$, for if this equation did not
hold, then $\lambda_{n+1}$ could not be the maximizer of 
$-\alpha_n\cdot\lambda+H(x_n,\lambda)$.

\emph{Step 4.} In this step, we  show that if $\bar
u(\cdot,t_{n+1})$ is locally semiconcave, and $\lambda_{n+1}\in D\sp + \bar
u(x_{n+1},t_{n+1})$, then $\bar u(\cdot,t_n)$ is locally semiconcave, and
$\lambda_n\in D\sp + \bar u(x_n,t_n)$. 

By the assumed local semiconcavity of $\bar u(\cdot,t_{n+1})$ we have
that there exists a $K>0$ such that 
\begin{equation*}
\bar u (x,t_{n+1}) \leq \lambda_{n+1}\cdot(x-x_{n+1})+K|x-x_{n+1}|\sp 2,
\end{equation*}
for all $x$ in some neighborhood around $x_{n+1}$.
Let us now consider the control
$H_\lambda(x,\lambda_{n+1})$ at the point $(x,t_n)$. Since this
control is not necessarily optimal except at $(x_n,t_n)$, we have
\begin{multline}\label{eq:semicon1}
\bar u (x,t_n) \leq \bar u\big(x+\Delta t
H_\lambda(x,\lambda_{n+1}),t_{n+1}\big) +\Delta t
L\big(x,H_\lambda(x,\lambda_{n+1})\big) \\
\leq \bar u(x_{n+1},t_{n+1})+\lambda_{n+1}\cdot\big(x+\Delta t
H_\lambda(x,\lambda_{n+1})-x_{n+1}\big)\\
+ K|x+\Delta t
H_\lambda(x,\lambda_{n+1})-x_{n+1}|\sp 2 + \Delta t
L\big(x,H_\lambda(x,\lambda_{n+1})\big), 
\end{multline}
for all $x$ in some neighborhood around $x_n$.
By the definition of $L$ in \eqref{eq:LegendreL} it follows that 
\begin{equation*}
L\big(x,H_\lambda(x,\lambda_{n+1})\big)=-H_\lambda(x,\lambda_{n+1})\cdot\lambda_{n+1}+H(x,\lambda_{n+1}).
\end{equation*} 
With this fact in \eqref{eq:semicon1} we have
\begin{multline}\label{eq:semicon2}
\bar u (x,t_n)
\leq \bar u(x_{n+1},t_{n+1})+\lambda_{n+1}\cdot(x-x_{n+1})\\
+ K|x+\Delta t
H_\lambda(x,\lambda_{n+1})-x_{n+1}|\sp 2 + \Delta t H(x,\lambda_{n+1}). 
\end{multline}
By the result in step 3 we have that
\begin{equation*}
x_{n+1}=x_n+\Delta t H_\lambda(x_n,\lambda_{n+1}),
\end{equation*}
so that, by the fact that $H$ is twice continuously differentiable
\begin{multline}\label{eq:semicon3}
|x+\Delta t H_\lambda(x,\lambda_{n+1})-x_{n+1}|\\ 
= |x-x_n + \Delta
 t\big(H_\lambda(x,\lambda_{n+1})-H_\lambda(x_n,\lambda_{n+1}) \big)| 
\leq K|x-x_n|,
\end{multline}
for some (new) $K$ and $x$ in a neighborhood of $x_n$. We
also need the facts that
\begin{equation}\label{eq:semicon4}
\bar u(x_n,t_n)=\bar u(x_{n+1},t_{n+1})+\Delta t L(x_n,\alpha_n)
\end{equation}
and
\begin{equation}\label{eq:semicon5}
L(x_n,\alpha_n)=-\lambda_{n+1}\cdot\alpha_n + H(x_n,\lambda_{n+1}) = 
-\lambda_{n+1}\cdot\frac{x_{n+1}-x_n}{\Delta t}+ H(x_n,\lambda_{n+1}).
\end{equation}
We insert the results \eqref{eq:semicon3}, \eqref{eq:semicon4}, and \eqref{eq:semicon5} into
\eqref{eq:semicon2}. This implies that there are constants $K>$ and
$K'>0$, such that 
\begin{multline*}
\bar u(x,t_n)\\ 
\leq \bar u(x_n,t_n)+\lambda_{n+1}\cdot(x-x_n)+\Delta t 
\big(H(x,\lambda_{n+1})-H(x_n,\lambda_{n+1})\big) +K|x-x_n|\sp 2 \\
\leq  \bar u(x_n,t_n)+\big(\lambda_{n+1}+\Delta t
H_x(x_n,\lambda_{n+1})\big)\cdot(x-x_n) +K|x-x_n|\sp 2\\
=\bar u(x_n,t_n)+\lambda_n\cdot(x-x_n)+K|x-x_n|\sp 2,
\end{multline*}
for all $x$ in a neighborhood around $x_n$.
We here used the differentiability of $H$ in the last inequality, and
the $\lambda_n$ evolution in \eqref{eq:SymplPontr} in the last
equality. 
%The goal of step 4 is
%thereby achieved.
This shows that $u(\cdot,t_n)$ is locally semiconcave, and that
$\lambda_n\in D\sp+ \bar u(x_n,t_n)$.

\emph{Step 5.} 
Since $\bar u(x,T)=g(x)$ and $g$ is differentiable, it follows that 
$\bar u(\cdot,T)$ is  semiconcave. 
Induction backwards in $n$ shows that $\bar u(\cdot,t_n)$ is locally
semiconcave for all $n$. Thereby the conclusions in step 2 and 3 hold
for all $n$.

%The proof is performed in three steps. In step one it is shown that
%the discrete value function $\bar u(\cdot,t_n)$ is semiconcave for
%each $n$. In step two 
\end{proof}

%\begin{lemma}\label{lem:lambdainsuperdiff}
%$\bar u(\cdot,t_n)$ is semiconcave for all $n$. $\lambda_n\in D\sp+
%  \bar u(x_n,t_n)$.
%\end{lemma}
%\begin{proof}
%
%\end{proof}

By the results from sections \ref{sec:low} and \ref{sec:low2}, Lemmas
\ref{lem:lowerbound} and \ref{thm:upperbound}  we have the following
convergence result.
\begin{thm}\label{thm:upplowconv}
Let the Hamiltonian $H$ be concave in its second argument, be
continuously differentiable and satisfy
\eqref{eq:Hamiltonianestimates}. Let $g:\Re\sp d\rightarrow\Re$ be
differentiable and satisfy $|g'(x)|\leq C_3$, for all $x\in\Re\sp
d$. Let $x_s$ be any element in $\Re\sp d$. Then the viscosity
solution $u$ satisfying the conditions in Theorem
\ref{thm:representation}, and the approximate value function $\bar u$
satisfy
\begin{multline*}
  -\frac{C_1C_2T}{2}\big((e\sp{C_2T}-1)\Delta t+\Delta t\sp
2\big)-\frac{C_1C_3}{2}(e\sp{C_2T}-1)\Delta t\\ 
\leq u(x_s,0)-\bar
u(x_s,0)\\
\leq \half C_1C_2(C_3+1)e\sp{C_2 T}T\Delta t.
\end{multline*}
\end{thm}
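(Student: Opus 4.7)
The two inequalities will be proved separately; this corresponds exactly to the role of Lemmas~\ref{lem:lowerbound} and \ref{thm:upperbound}. Both bounds use the variational representation of $u$ from Theorem~\ref{thm:representation} and the definition of $\bar u$ as the optimal value of \eqref{eq:J}; in each direction one produces an admissible competitor in one variational problem from a (near-)optimizer of the other, and controls the resulting cost gap by regularity of the running cost $L$.

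For the upper bound, I take a minimizer $(\alpha_m,\dots,\alpha_{N-1})$ of $J_{(x_s,0)}$ (whose existence and bound $|\alpha_n|\le C_1$ follow from the compactness argument of Step~1 in the proof of Theorem~\ref{thm:OptimEquivSP} together with \eqref{eq:Hamiltonianestimates}) and interpolate linearly to $\tilde x(t)=x_n+(t-t_n)\alpha_n$ on $[t_n,t_{n+1}]$. Then $\tilde x$ is admissible in \eqref{eq:V} with $\tilde x(T)=x_N$, so Theorem~\ref{thm:representation} gives
\begin{equation*}
u(x_s,0)-\bar u(x_s,0) \le \sum_{n=0}^{N-1}\int_{t_n}^{t_{n+1}}\bigl[L(\tilde x(t),\alpha_n)-L(x_n,\alpha_n)\bigr]\,dt.
\end{equation*}
From \eqref{eq:LegendreL} and the second inequality in \eqref{eq:Hamiltonianestimates}, each integrand is bounded by $C_2|\tilde x(t)-x_n|(1+|\lambda_{n+1}^{\ast}|)$, where $\lambda_{n+1}^{\ast}$ (nearly) attains the supremum defining $L(x_n,\alpha_n)$. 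Since $|\tilde x(t)-x_n|\le C_1(t-t_n)$, each time integral contributes $\tfrac12 C_1\Delta t^2$, and summation over the $N=T/\Delta t$ cells gives a bound of the form $\tfrac12 C_1 C_2 \sup_n(1+|\lambda_{n+1}^{\ast}|)\,T\Delta t$.

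For the lower bound I argue symmetrically: take an optimal path $x:[0,T]\to\Re\sp d$ for $u(x_s,0)$, which exists by Theorem~\ref{thm:optsolsolvehamsyst} and satisfies $|x'(t)|\le C_1$ a.e.\ by the same argument as above. Sample on the grid as $y_n=x(t_n)$ and take the discrete control $\alpha_n=(y_{n+1}-y_n)/\Delta t$ as a competitor for $\bar u$. Convexity of $L(y_n,\cdot)$ and Jensen's inequality give $\Delta t\,L(y_n,\alpha_n)\le\int_{t_n}^{t_{n+1}}L(y_n,x'(t))\,dt$. The same Lipschitz-in-$x$ estimate on $L$ as above, combined with $|y_n-x(t)|\le C_1(t-t_n)$, then reduces the lower bound to a routine telescoping sum whose leading $\Delta t$ term produces the Gr\"onwall-type factor $(e^{C_2T}-1)$ and whose $\Delta t^2$ term comes from the second-order interpolation remainder.

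The main obstacle in both directions is to control the factor $(1+|\lambda_{n+1}^{\ast}|)$ appearing in the Lipschitz estimate on $L$ uniformly in $n$. The Pontryagin dual equation in \eqref{eq:SymplPontr} (respectively \eqref{eq:HamSyst}) together with the terminal condition $|\lambda_N|\le|g'(x_N)|\le C_3$ and the second bound in \eqref{eq:Hamiltonianestimates} yields the recursion $1+|\lambda_n|\le(1+C_2\Delta t)(1+|\lambda_{n+1}|)$, and a discrete Gr\"onwall argument then produces $1+|\lambda_n|\le(C_3+1)e^{C_2T}$. This is precisely the factor $e^{C_2T}$ visible on both sides of the statement; once this uniform bound is in hand, everything else is bookkeeping, and adding the upper and lower estimates together yields the claimed two-sided inequality.
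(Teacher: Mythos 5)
Your skeleton (two one-sided bounds, each obtained by feeding a competitor built from the optimizer of one variational problem into the other) matches the paper's at the top level, but both of your estimates rest on a step that is false: the claim that $L(x_1,\alpha)-L(x_2,\alpha)$ can be bounded by $C_2|x_1-x_2|(1+|\lambda^\ast|)$ for a (near-)maximizer $\lambda^\ast$ of the defining supremum. The running cost $L$ of \eqref{eq:LegendreL} is only extended-valued and lower semicontinuous: its effective domain in $\alpha$ depends on $x$, so $L(x_2,\alpha)$ can equal $+\infty$ while $L(x_1,\alpha)<\infty$ for $x_2$ arbitrarily close to $x_1$ (take, e.g., $H(x,\lambda)=x\lambda-|\lambda|$ in $d=1$, for which $L(x,\alpha)=0$ if $|x-\alpha|\le 1$ and $+\infty$ otherwise). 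Moreover the supremum in \eqref{eq:LegendreL} need not be attained, and near-maximizers need not be bounded; the bounded dual variable $\lambda_{n+1}$ of \eqref{eq:SymplPontr} is the maximizer at $(x_n,\alpha_n)$, not at the perturbed point $(\tilde x(t),\alpha_n)$ where you actually need it. Consequently your upper bound already fails at the point where you assert $u(x_s,0)\le\sum_n\int L(\tilde x(t),\alpha_n)\,dt+g(x_N)$ is within $O(\Delta t)$ of $\bar u(x_s,0)$: that integral may be $+\infty$. The same objection kills the lower bound, where $L(y_n,x'(t))$ may be $+\infty$ even though $L(x(t),x'(t))$ is finite.

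This is precisely the obstacle the paper spends most of its length overcoming. For the lower bound it does not shift the base point with the control fixed; it perturbs the control as well: Lemma \ref{lem:alphaexistence} (proved via Rockafellar's alternative, Theorem \ref{thm:Rockafellar}) produces $\tilde\alpha$ with $|\tilde\alpha-x'(t)|\le C_2|x_n-x(t)|+\beta$ and $L(x_n,\tilde\alpha)\le L(x(t),x'(t))+C_2|x_n-x(t)|$, and Lemma \ref{lem:lowersemicont} together with Michael's selection theorem (Theorem \ref{thm:Michael}) makes $t\mapsto\tilde\alpha(t)$ measurable. The perturbed controls make the discrete path drift from $x(t_n)$, and it is this drift --- not the dual-variable bound, as you suggest --- that produces the Gr\"onwall factor $(e^{C_2T}-1)$ in the lower bound; with your choice $y_n=x(t_n)$ there would be no drift at all, which is a sign the accounting does not close. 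For the upper bound the paper abandons the direct-competitor route entirely and instead integrates $\frac{d}{dt}u(\bar x(t),t)$ along the interpolated discrete path, using semiconcavity, the superdifferential inequality \eqref{eq:pvisc}, and a modified Hamiltonian $\tilde H$ with a quadratic penalty in $\lambda$ so that the supremum defining $\tilde L$ is always finite and attained at a $\lambda^\ast$ satisfying \eqref{eq:lambdastarbound}; Lemma \ref{lem:boundedlambda} is used only to check $\tilde L=L$ at the discrete optimal pairs. Your dual-variable recursion and the resulting bound $(C_3+1)e^{C_2T}$ are correct and do appear in the final constant, but they cannot substitute for these two mechanisms.
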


If the Hamiltonian is not differentiable, the following result may be
used.
\begin{thm}
Let $H$ be a Hamiltonian which is concave in its second argument, and
satisfies \eqref{eq:Hamiltonianestimates}. Let $H\sp\delta$ be a
Hamiltonian that satisfies the same conditions, and which is
continuously differentiable, and satisfies 
\begin{equation*}
\|H-H\sp\delta\|_{L\sp\infty(\Re\sp d\times \Re\sp d)}\leq \delta.
\end{equation*}
Then 
\begin{multline*}
  -\frac{C_1C_2T}{2}\big((e\sp{C_2T}-1)\Delta t+\Delta t\sp
2\big)-\frac{C_1C_3}{2}(e\sp{C_2T}-1)\Delta t-T \delta \\ 
\leq u(x_s,0)-\bar
u(x_s,0)\\
\leq \half C_1C_2(C_3+1)e\sp{C_2 T}T\Delta t+T\delta,
\end{multline*}
where $\bar u$ is computed using $H\sp\delta$.
\end{thm}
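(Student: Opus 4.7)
The plan is to reduce the present theorem to Theorem~\ref{thm:upplowconv} by inserting the viscosity solution $u^\delta$ of the Hamilton--Jacobi equation
\begin{equation*}
u^\delta_t + H^\delta(x, u^\delta_x) = 0, \quad u^\delta(x,T) = g(x),
\end{equation*}
as an intermediate comparison. Since $H^\delta$ is continuously differentiable, concave in its second argument, and inherits the bounds \eqref{eq:Hamiltonianestimates} (possibly with constants arbitrarily close to $C_1$ and $C_2$), Theorem~\ref{thm:upplowconv} applies directly to the pair $(u^\delta, \bar u)$ and yields the two-sided estimate in the statement but without the $T\delta$ terms.

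The remaining task is to control $u - u^\delta$. This is where the standard comparison principle for Hamilton--Jacobi equations, already invoked in the introduction, enters: since $\|H - H^\delta\|_\infty \le \delta$, one obtains
\begin{equation*}
\|u - u^\delta\|_{L^\infty(\Re^d \times [0,T])} \le T\delta,
\end{equation*}
by observing that $u^\delta + (T-t)\delta$ is a supersolution and $u^\delta - (T-t)\delta$ a subsolution of the equation solved by $u$, and applying the comparison result underlying Theorem~\ref{thm:representation}. Both $u$ and $u^\delta$ fulfil the linear-growth bound \eqref{eq:existbound} required for uniqueness, since $g$ is bounded plus linear.

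Combining the two bounds by the triangle inequality
\begin{equation*}
u(x_s, 0) - \bar u(x_s, 0) = \bigl(u(x_s,0) - u^\delta(x_s,0)\bigr) + \bigl(u^\delta(x_s,0) - \bar u(x_s,0)\bigr)
\end{equation*}
and estimating each summand by $\pm T\delta$ and by the Theorem~\ref{thm:upplowconv} bounds respectively gives the desired inequalities.

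The only potentially delicate point is verifying that $H^\delta$ can be chosen so as to satisfy the hypotheses of Theorem~\ref{thm:upplowconv} with the same constants $C_1, C_2$ appearing in the final estimate; this is handled either by assuming the regularization is constructed to preserve these bounds (e.g.\ by mollification, which does not increase the Lipschitz constants), or by allowing arbitrarily small enlargements of the constants that are absorbed into the final inequality. No new analytic work beyond this verification and the application of the comparison principle is needed.
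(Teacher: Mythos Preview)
Your proposal is correct and follows essentially the same route as the paper: introduce the intermediate viscosity solution $u^\delta$, invoke the comparison principle to obtain $\|u-u^\delta\|_\infty\le T\delta$, apply Theorem~\ref{thm:upplowconv} to the pair $(u^\delta,\bar u)$, and combine by the triangle inequality. Your discussion of the constants is unnecessary here, since the hypothesis ``$H^\delta$ satisfies the same conditions'' already stipulates that $H^\delta$ obeys \eqref{eq:Hamiltonianestimates} with the same $C_1,C_2$.
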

\begin{proof}
The comparison principle for solutions to Hamilton-Jacobi equations
gives that the two solutions to \eqref{eq:HJ} with $H$ and
$H\sp\delta$ differ by $T\delta$, see \cite{Sandberg-Szepessy}. The
result follows from Theorem \ref{thm:upplowconv}. 
\end{proof}

\section{Lower bound of $u(x_s,0)-\bar u(x_s,0)$}\label{sec:low}

%We assume that the Hamiltonian satisfies
%\begin{equation}\label{eq:Hamiltonianbound}
%|H(x_1,\lambda)-H(x_2,\lambda)| \leq C|x_1-x_2|(1+|\lambda|),\quad
% \text{for all } x_1,\ x_2,\ \lambda\in\Re\sp d.
%\end{equation}

In order to be able to prove the inequality, we need the result in
Theorem \ref{thm:Rockafellar} for convex functions. 
%We need the
The theorem uses the
following definition.
\begin{definition}\label{def:dirrec}
Let $f$ be a convex real-valued function defined on $\Re\sp d$. 
A vector $y\neq 0$ is called a \emph{direction of recession} of $f$ if
$f(x+ty)$ is a nondecreasing function of $t$, for every $x\in\Re\sp d$.
\end{definition}
It is shown in \cite{Rockafellar} that if $f(x+ty)$ is a nondecreasing
function of $t$ for one $x\in\Re\sp d$, then this property holds for
all $x\in\Re\sp d$.
The following thorem is taken from \cite{Rockafellar}, but in a
slightly simplified form.
\begin{thm}\label{thm:Rockafellar}
Let $\big\{f_i\ |\ i \in I\big\}$, where $I$ is an arbitrary index
set, be a collection of real-valued
convex functions on $\Re\sp d$ 
which have no common direction of recession.
%with the property that $f_i(x+\lambda y)$
Then one and only one of the following alternatives holds:
\begin{enumerate}[a)]
\item There exists a vector $x\in\Re\sp d$ such that 
\begin{equation*}
f_i(x) \leq 0,\quad \text{for all } i\in I.
\end{equation*}
\item There exist non-negative real numbers $\l_i$, only finitely
  many non-zero, such that, for some $\varepsilon > 0$, one has
\begin{equation}\label{eq:lambdaf}
\sum_{i\in I} \l_i f_i(x) \geq \varepsilon,\quad \text{for all }
x\in \Re\sp d.
\end{equation}
\end{enumerate}
\end{thm}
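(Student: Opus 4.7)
The plan is to first note that the alternatives are mutually exclusive: if some $x_0$ realizes (a), then for any non-negative $\lambda_i$ (finitely many non-zero) one has $\sum_i \lambda_i f_i(x_0)\leq 0<\varepsilon$, contradicting (b). So it suffices to assume (a) fails and derive (b).

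The first substantive step is a reduction to a finite subfamily, using the recession hypothesis together with two compactness arguments. Each $f_i$ is real-valued convex on $\Re\sp d$, hence continuous, and its recession cone $\text{rec}(f_i):=\{y:t\mapsto f_i(x+ty)\text{ is nondecreasing}\}$ is a closed convex cone. Because $\bigcap_{i\in I}\text{rec}(f_i)=\{0\}$, the complements of these cones form an open cover of the unit sphere $S\sp{d-1}$, so compactness of $S\sp{d-1}$ yields a finite $F_0\subset I$ with $\bigcap_{i\in F_0}\text{rec}(f_i)=\{0\}$. The set $K:=\bigcap_{i\in F_0}\{f_i\leq 0\}$ is then closed with trivial recession cone, hence compact. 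If $K=\emptyset$, set $F:=F_0$; otherwise the failure of (a) makes the open sets $\{f_j>0\}_{j\in I}$ an open cover of $K$, so a finite subcover gives indices $j_1,\dots,j_r$ and $F:=F_0\cup\{j_1,\dots,j_r\}$ is finite, has trivial common recession cone (since $F\supset F_0$), and satisfies $\bigcap_{i\in F}\{f_i\leq 0\}=\emptyset$.

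Next, reduce to finite-dimensional optimization. Define $\Phi(x):=\max_{i\in F}f_i(x)$, which is convex and continuous, with recession function $\Phi\sp\infty=\max_{i\in F}f_i\sp\infty$; triviality of the common recession cone makes $\Phi\sp\infty$ strictly positive off the origin, so $\Phi$ is coercive and attains its minimum at some $\bar x\in\Re\sp d$. Put $m:=\Phi(\bar x)$; because $\bigcap_{i\in F}\{f_i\leq 0\}=\emptyset$ one has $m>0$. Fermat's condition gives $0\in\partial\Phi(\bar x)$, and the subdifferential formula for a pointwise maximum of finitely many convex functions yields weights $\mu_i\geq 0$, supported on the active set $\{i\in F:f_i(\bar x)=m\}$ and summing to $1$, together with subgradients $v_i\in\partial f_i(\bar x)$, such that $\sum_i\mu_i v_i=0$. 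Combining the subgradient inequality $f_i(x)\geq m+v_i\cdot(x-\bar x)$ (for active $i$) with these weights gives $\sum_i \mu_i f_i(x)\geq m$ for every $x\in\Re\sp d$. Setting $\lambda_i:=\mu_i$ for $i\in F$ active, $\lambda_i:=0$ otherwise, and $\varepsilon:=m$ proves (b).

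The main obstacle is the initial reduction, which transmutes a purely global hypothesis over a possibly uncountable $I$ into a statement about a finite subfamily retaining both the triviality of the common recession cone and the emptiness of $\bigcap\{f_i\leq 0\}$; this step relies on two classical but non-elementary convex-analytic facts, namely closedness of recession cones of real-valued convex functions and boundedness of intersections of sublevel sets whose recession cones meet only at the origin. Once the reduction is in place, the remainder is a short application of Fermat's rule and the subdifferential calculus for finite maxima.
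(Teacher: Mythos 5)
Your argument is correct, but note that the paper does not prove this theorem at all: it is imported verbatim (in ``slightly simplified form'') from Rockafellar's \emph{Convex Analysis} (Theorem 21.3 there), so there is no in-paper proof to match against. What you have supplied is a genuinely self-contained alternative. Your two compactness reductions are sound: closedness of the recession cones of finite convex functions makes their complements an open cover of $S^{d-1}$, yielding a finite $F_0$ with trivial common recession cone; boundedness of a closed convex set with trivial recession cone then makes $K=\bigcap_{i\in F_0}\{f_i\leq 0\}$ compact, and the failure of alternative a) lets the open sets $\{f_j>0\}$ finitely cover it. The endgame via coercivity of $\Phi=\max_{i\in F}f_i$, Fermat's rule $0\in\partial\Phi(\bar x)$, and the convex-hull formula for the subdifferential of a finite maximum cleanly produces the weights $\mu_i$ and the bound $\sum_i\mu_i f_i\geq m>0$. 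Rockafellar's own route is different in its second half: he handles the finite case (his Theorem 21.2) by a separation argument applied to a convex set built in $\Re^{m+1}$ from the epigraph data, and reduces the infinite case to the finite one using a Helly-type theorem under the recession hypothesis. Your version buys concreteness (the multipliers arise as the active-set weights at the minimizer of the max-function, and $\varepsilon$ is the optimal value $m$) at the cost of invoking the subdifferential max-formula; Rockafellar's buys greater generality (proper, not necessarily finite, convex functions) at the cost of a less constructive separation step. Either way, the statement is used in the paper only as a cited black box, so your proof is a valid substitute.
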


\begin{lemma}\label{lem:alphaexistence}
Assume that the Hamiltonian $H$ satisfies equation
\eqref{eq:Hamiltonianestimates} and is concave in its second argument. Let $x_1,\ x_2,$ and $\alpha_1$ be any
elements in $\Re\sp d$, and $\beta$ any positive number. Then there exists an element $\alpha_2$ in
$\Re\sp d$ such that 
\begin{equation}\label{eq:existalphabound}
|\alpha_2-\alpha_1|\leq C_2|x_2-x_1|+\beta,
\end{equation}
and
\begin{equation}\label{eq:exitssupphyperplane}
L(x_2,\alpha_2) \leq L(x_1,\alpha_1)+C_2|x_2-x_1|.
\end{equation}
%The set-valued function $A(t)$ is non-empty.
\end{lemma}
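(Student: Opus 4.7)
The plan is to apply Theorem~\ref{thm:Rockafellar} to two convex functions on $\Re\sp d$. Write $c = C_2|x_2 - x_1|$ and set
\begin{equation*}
f_1(\mu) = |\mu| - c - \beta, \qquad f_2(\mu) = L(x_2, \alpha_1 + \mu) - L(x_1, \alpha_1) - c.
\end{equation*}
Both are convex in $\mu$: $f_1$ manifestly, and $f_2$ because $L(x_2, \cdot)$ is a supremum of affine maps. Any $\mu$ with $f_1(\mu) \leq 0$ and $f_2(\mu) \leq 0$ produces $\alpha_2 := \alpha_1 + \mu$ satisfying \eqref{eq:existalphabound} and \eqref{eq:exitssupphyperplane}. (If $L(x_1, \alpha_1) = +\infty$ then $\alpha_2 = \alpha_1$ works trivially, so assume $L(x_1, \alpha_1) < \infty$.)

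I would first verify the hypothesis of Theorem~\ref{thm:Rockafellar}: $f_1$ has no direction of recession in the sense of Definition~\ref{def:dirrec}, because for any $y \neq 0$ the convex map $t \mapsto |\mu + ty|$ is V-shaped and thus never nondecreasing on all of $\Re$. A fortiori $\{f_1, f_2\}$ has no common direction of recession, so the theorem reduces the proof to excluding alternative (b).

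Suppose then $\l_1, \l_2 \geq 0$ are not both zero and $\varepsilon > 0$ satisfies $\l_1 f_1(\mu) + \l_2 f_2(\mu) \geq \varepsilon$ for all $\mu$. The case $\l_2 = 0$ is dispatched by evaluating at $\mu = 0$: $-\l_1(c + \beta) \geq \varepsilon > 0$ fails. Otherwise normalize $\l_2 = 1$, let $\l = \l_1 \geq 0$, and take the infimum over $\mu$:
\begin{equation*}
\inf_{\alpha \in \Re\sp d}\bigl[\l|\alpha - \alpha_1| + L(x_2, \alpha)\bigr] \geq \varepsilon + \l(c + \beta) + L(x_1, \alpha_1) + c.
\end{equation*}
I would then evaluate the left-hand side via Fenchel duality. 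The conjugate of $\l|\cdot - \alpha_1|$ is $\alpha_1 \cdot \lambda + \delta_{\overline{B}(0,\l)}(\lambda)$, while the Legendre--Fenchel identity \eqref{eq:LegendreH} gives $L(x_2, \cdot)\sp{*}(\lambda) = -H(x_2, -\lambda)$. Since $\l|\cdot - \alpha_1|$ is continuous on all of $\Re\sp d$, the qualification for Fenchel duality is met and strong duality yields
\begin{equation*}
\inf_{\alpha \in \Re\sp d}\bigl[\l|\alpha - \alpha_1| + L(x_2, \alpha)\bigr] = \sup_{|\lambda|\leq \l}\bigl[H(x_2, \lambda) - \alpha_1 \cdot \lambda\bigr].
\end{equation*}
Inserting the Hamiltonian bound $H(x_2, \lambda) \leq H(x_1, \lambda) + c(1 + |\lambda|) \leq H(x_1, \lambda) + c(1 + \l)$ on $|\lambda| \leq \l$ majorizes this supremum by $c(1 + \l) + L(x_1, \alpha_1)$, and chaining the two displays collapses to $0 \geq \varepsilon + \l\beta$, contradicting $\varepsilon, \beta > 0$.

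The main obstacle is the Fenchel-duality step: identifying $L(x_2, \cdot)\sp{*}$ through the biconjugacy \eqref{eq:LegendreH} and verifying the qualification so that strong duality holds. A secondary technicality is that Theorem~\ref{thm:Rockafellar} is stated for real-valued convex functions whereas $f_2$ can take the value $+\infty$ off $\operatorname{dom}(L(x_2, \cdot))$; this is resolved by the extended-valued form of the theorem available in \cite{Rockafellar}, or by restricting attention to the convex set $\operatorname{dom}(f_2)$, which is nonempty because $H(x_2, \cdot)$ has a nonempty supergradient at every point.
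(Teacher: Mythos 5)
Your argument is correct, but it takes a genuinely different route from the paper's. The paper applies Theorem~\ref{thm:Rockafellar} to the \emph{infinite} family of real-valued convex functions of $\lambda$ in \eqref{eq:collconvfunc}, indexed by $\alpha$ in the closed ball of radius $C_2|x_2-x_1|+\beta$ about $\alpha_1$: it shows alternative a) fails, invokes b), and reads $\alpha_2$ off as the convex combination $\sum_i l_i\alpha\sp i$ built from the multipliers. That route stays within the real-valued form of the theorem as quoted in the paper, but it costs a separate perturbation-and-limit argument for the degenerate case $H(x_2,0)=L(x_1,\alpha_1)+C_2|x_2-x_1|$. You instead apply the theorem to just \emph{two} functions of $\mu=\alpha-\alpha_1$, exclude alternative b) by Fenchel duality, and obtain $\alpha_2$ directly from a); this treats all cases uniformly and isolates exactly where the bound \eqref{eq:Hamiltonianestimates} enters, namely in $\sup_{|\lambda|\leq l}\big[H(x_2,\lambda)-\alpha_1\cdot\lambda\big]\leq C_2|x_2-x_1|(1+l)+L(x_1,\alpha_1)$. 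The price is the two technicalities you name, and both resolve correctly: $f_2$ is closed (a translate of the conjugate of $-H(x_2,-\cdot)$) and proper, so the closed-proper, extended-valued form of Rockafellar's theorem applies; and you genuinely need \emph{strong} duality, since weak duality gives $\inf\geq\sup$, the useless direction --- the qualification holds because $l|\cdot-\alpha_1|$ is finite and continuous everywhere while $\operatorname{dom} L(x_2,\cdot)\neq\emptyset$, as you note. Two small remarks: Definition~\ref{def:dirrec} should read ``nonincreasing'' (Rockafellar's convention), but your $f_1$ is coercive and so has no direction of recession under either reading, leaving the argument unaffected; and in the subcase $l_1>0=l_2$ you implicitly use the convention $0\cdot(+\infty)=0$ at points where $f_2=+\infty$, which is the standard one in \cite{Rockafellar}.
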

\begin{proof}
%The statement is trivial if $L(x_1,\alpha_1)=\infty$,
If $L(x_1,\alpha_1)=\infty$, we can let $\alpha_2=\alpha_1$. We
henceforth assume that $L(x_1,\alpha_1)<\infty$.
From the definition of $L$ in \eqref{eq:LegendreL} it follows that
$H(x_1,0)\leq L(x_1,\alpha_1)$. By \eqref{eq:Hamiltonianestimates} we
have 
\begin{equation}\label{eq:HLdiff}
H(x_2,0) \leq L(x_1,\alpha_1)+C_2|x_2-x_1|.
\end{equation}
We start by assuming that a strict inequality holds in
\eqref{eq:HLdiff}, and then use the result for this case to prove the
result in the general situation where we also allow equality in the
equation.

When we assume that $H(x_2,0) > L(x_1,\alpha_1)+C_2|x_2-x_1|$, it is
possible to use Theorem \ref{thm:Rockafellar}  with the functions
\begin{equation}\label{eq:collconvfunc}
\lambda \mapsto -H(x_2,\lambda)+\alpha\cdot\lambda+L(x_1,\alpha_1)+C_2|x_2-x_1|.
\end{equation}
We let $\alpha$ be any element in the ball of radius  
%$|\alpha-\alpha_1|\leq C|x_2-x_1|$, i.e.\ 
$C_2|x_2-x_1|+\beta$ centered at $\alpha_1$, the set we will use as the index
set $I$ in Theorem \ref{thm:Rockafellar}.

%We now note that if $C=0$ or $x_1=x_2$, then $\alpha_2=\alpha_1$
%satisfies equations \eqref{eq:existalphabound} and
%\eqref{eq:exitssupphyperplane} in the lemma. We therefore assume
%that 
%We will now assume that 
%$C>0$ and $x_1 \neq x_2$. 
%If either of these
%assumptions are not met, we could choose $\alpha_2=\alpha_1$.
%Under these assumptions 
We now show that the functions in \eqref{eq:collconvfunc} have
no  common direction of recession. Let $\lambda=a v$, where $v$
is a unit 
vector in $\Re\sp d$, and let
$\alpha=\alpha_1+\big(C_2|x_2-x_1|+\beta\big)v$. The corresponding
function from \eqref{eq:collconvfunc} then satisfies
%\begin{equation}\label{eq:infconv}
%\begin{split}
%&H(x_2,a\cdot v)-a\alpha_1\cdot v+
%\big(C|x_2-x_1|+\delta\big)a+L(x_1,\alpha_1)+C|x_2-x_1|\\ 
%&=
%H(x_1,a\cdot v)-a\alpha_1\cdot v +\big(C|x_2-x_1|+\delta\big)a\\
%&+H(x_2, a\cdot v)-H(x_1,a \cdot
%v)+L(x_1,\alpha_1) +C|x_2-x_1|\geq \delta a  \rightarrow \infty\quad\text{when
%}a\rightarrow \infty,
%\end{split}
%\end{equation}
\begin{equation}\label{eq:infconv}
\begin{split}
&-H(x_2,a v)+a\alpha_1\cdot v+
\big(C_2|x_2-x_1|+\beta\big)a+L(x_1,\alpha_1)+C_2|x_2-x_1|\\ 
&=
-H(x_1,a v)+a\alpha_1\cdot v +\big(C_2|x_2-x_1|+\beta\big)a +H(x_1, a v)-H(x_2,a
v) \\
&\quad +L(x_1,\alpha_1) +C_2|x_2-x_1|\geq \beta a  \rightarrow \infty\quad\text{when
}a\rightarrow \infty,
\end{split}
\end{equation}
since $L(x_1,\alpha_1) \geq -a\alpha_1\cdot v +H(x_1,av)$ by the
definition of $L$ in \eqref{eq:LegendreL}, and 
$H(x_1,a\cdot v)-H(x_2,a\cdot v) \geq -C_2|x_2-x_1|(1+a)$ by equation
\eqref{eq:Hamiltonianestimates}.

We can therefore apply Theorem \ref{thm:Rockafellar} on the set of
functions in \eqref{eq:collconvfunc} with the $\alpha$:s taken from
the ball of radius $C_2|x_2-x_1|+\beta$, centered at $\alpha_1$. 
Since all functions in \eqref{eq:collconvfunc} are positive at $\lambda=0$,
and at least one function is positive at every other point, by
\eqref{eq:infconv}, alternative a) in Theorem \ref{thm:Rockafellar}
can not hold. 
Therefore the set of functions satisfy alternative b). 
We may assume that the numbers $\l_i$ in \eqref{eq:lambdaf}
satisfy 
\[
\sum_{i\in I} l_i =1.
\]
This corresponds to a multiplication of the inequality
\eqref{eq:lambdaf}  by a positive number. This changes the number
$\varepsilon$, but that is not important here. Hence we have that there
%Note that equation \eqref{eq:lambdaf} can be multiplied 
exist a finite number of vectors $\alpha\sp i$, with 
\begin{equation}\label{eq:alphaball}
|\alpha\sp
i-\alpha_1| \leq C_2|x_2-x_1|+\beta, 
\end{equation}
such that
\begin{multline*}
\sum_i l_i\big(-H(x_2,\lambda)+\alpha\sp i \cdot
\lambda+L(x_1,\alpha_1)+C_2|x_2-x_1|\big)\\ 
=
-H(x_2,\lambda)+\big(\sum_i l_i\alpha\sp i\big)\cdot\lambda
+L(x_1,\alpha_1)+C_2|x_2-x_1| \geq \varepsilon.
\end{multline*}
Since every vector $\alpha\sp i$ satisfy \eqref{eq:alphaball}, so does
the convex combination
\begin{equation*}
\sum_i l_i\alpha\sp i.
\end{equation*}
This convex combination can be taken as the $\alpha_2$ in the lemma.

Let us now consider the remaining case where
$H(x_2,0)=L(x_1,\alpha_1)+C_2|x_2-x_1|$. We now modify the functions in
\eqref{eq:collconvfunc} to be
\begin{equation}\label{eq:collconvfunc2}
\lambda \mapsto
-H(x_2,\lambda)+\alpha\cdot\lambda+L(x_1,\alpha_1)+C_2|x_2-x_1|
+ \gamma,
\end{equation}
where $\gamma$ is a positive number. The same analysis as for
\eqref{eq:collconvfunc} shows that there is a vector $\alpha\sp\gamma$
whose distance from $\alpha_1$ is less than or equal to
$C|x_2-x_1|+\beta$, such that 
\begin{equation*}
-H(x_2,\lambda)+\alpha\sp\gamma\cdot\lambda
+L(x_1,\alpha_1)+C_2|x_2-x_1|+\gamma \geq 0.
\end{equation*}
Since all $\alpha\sp\gamma$ are contained in a compact set it is
possible to find a sequence $\{\gamma_n\}_1\sp\infty$ converging to
zero such that $\alpha\sp{\gamma_n}$ converges to an element we may
call $\alpha_2$. It is straightforward to check that this $\alpha_2$
satisfies the conditions in the lemma.
\end{proof}
The idea is now to use Lemma \ref{lem:alphaexistence} in order to show
that there on each interval $(t_n,t_{n+1})$ exists a function
$\tilde\alpha(t)$, which satisfies
\begin{equation*}
\begin{split}
|\tilde\alpha(t)-\alpha(t)| &\leq C_2|x_n-x(t)| + \beta,\\
L\big(x_n,\tilde\alpha(t)\big)& \leq L\big(x(t),\alpha(t)\big) + C_2|x_n-x(t)|,
\end{split}
\end{equation*}
where the function $x(t)$ is a minimizer to the value function V in
\eqref{eq:V}. While Lemma \ref{lem:alphaexistence} provides the
existence of such an $\tilde\alpha(t)$ for each particular time $t$,
it does not provide us with a measurable function $\tilde\alpha$. In
order to show that such a measurable function exists, the result in
Theorem \ref{thm:Michael}, Michael's Selection Theorem, is needed. It can be found in a
more general form in e.g.\ \cite{Aubin-Cellina}. Before stating it we
need the following definition, which can also  be found in a more
general form in \cite{Aubin-Cellina}.
\begin{definition}\label{def:Lowersemicontinuity}
A function $F$ from the interval $(t_1,t_2)$ into the nonempty subsets
of $\Re\sp d$ is \emph{lower semicontinuous} at $t\sp *\in(t_1,t_2)$
if for any $z\sp * \in F(t\sp *)$ and any neighborhood $N(z\sp *)$ of
$z\sp *$, there exists a neighborhood $N(t\sp *)$ of $t\sp *$ such
that for all $t\in N(t\sp *)$,
\begin{equation*}
F(t)\cap N(z\sp *) \neq \emptyset.
\end{equation*}
We say that $F$ is \emph{lower semicontinuous} if it is lower
semicontinuous at every $t\in(t_1,t_2)$.
\end{definition}
\begin{thm}\label{thm:Michael}
Let the set-valued function $F$ from the interval $(t_1,t_2)$ into the
closed convex subsets of $\Re\sp d$ be lower semicontinuous. Then
there exists $f:(t_1,t_2)\rightarrow\Re\sp d$ which is a continuous
selection of $F$, i.e.\ which satisfies $f(t)\in F(t)$ for all $t_1<t<t_2$.
\end{thm}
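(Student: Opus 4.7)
The plan is to combine a partition-of-unity construction of approximate continuous selections with an iterative Cauchy procedure, the classical scheme for Michael-type theorems.

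First, I would establish that for every $\varepsilon>0$ there exists a continuous $\varepsilon$-selection, meaning a continuous $f_\varepsilon:(t_1,t_2)\rightarrow\Re\sp d$ satisfying $\text{dist}(f_\varepsilon(t),F(t))<\varepsilon$ for every $t$. For each $t\sp *\in(t_1,t_2)$, pick any $z\sp *\in F(t\sp *)$; by the lower semicontinuity in Definition \ref{def:Lowersemicontinuity} there is an open neighborhood $U_{t\sp *}$ of $t\sp *$ such that $F(t)\cap B(z\sp *,\varepsilon)\neq\emptyset$ for all $t\in U_{t\sp *}$. Since $(t_1,t_2)$ is a paracompact metric space, I would extract a locally finite refinement $\{V_i\}$ of $\{U_{t\sp *}\}$, with each $V_i\subset U_{t_i\sp *}$ for some chosen $t_i\sp *$ and corresponding $z_i\sp *\in F(t_i\sp *)$, together with a subordinate partition of unity $\{\phi_i\}$. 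Define $f_\varepsilon(t)=\sum_i\phi_i(t)z_i\sp *$. If $\phi_i(t)>0$ then $t\in U_{t_i\sp *}$, so there exists $w_i\in F(t)$ with $|w_i-z_i\sp *|<\varepsilon$; the convex combination $\sum_i\phi_i(t)w_i$ lies in $F(t)$ by convexity of the values of $F$, and differs from $f_\varepsilon(t)$ by at most $\varepsilon$, giving the desired estimate.

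Second, I would produce the genuine selection as a uniform limit. Set $\varepsilon_n=2\sp{-n-1}$ and let $f_0$ be an $\varepsilon_0$-selection from the previous step. Given $f_n$, define the truncated map $F_{n+1}(t)=F(t)\cap B(f_n(t),3\varepsilon_n)$; this map has nonempty values since $\text{dist}(f_n(t),F(t))<\varepsilon_n$, and convex values as intersections of convex sets. I would then check that $F_{n+1}$ is lower semicontinuous, apply the first step to $F_{n+1}$, and obtain a continuous $\varepsilon_{n+1}$-selection $f_{n+1}$ of $F$ satisfying $|f_{n+1}(t)-f_n(t)|\leq 3\varepsilon_n$ automatically. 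The sequence $\{f_n\}$ is then uniformly Cauchy, so it converges to a continuous $f:(t_1,t_2)\rightarrow\Re\sp d$; closedness of $F(t)$ together with $\text{dist}(f_n(t),F(t))\rightarrow 0$ gives $f(t)\in F(t)$, as required.

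The main obstacle will be the lower semicontinuity of the truncated map $F_{n+1}$, since intersection with an open ball does not in general preserve lower semicontinuity of set-valued maps. The resolution is to argue directly from the definition: given $z\sp *\in F_{n+1}(t\sp *)$ and a neighborhood $N(z\sp *)$, one exploits the strict inequality $|z\sp *-f_n(t\sp *)|<3\varepsilon_n$ together with the continuity of $f_n$ to find, by the lower semicontinuity of $F$ alone, an element of $F(t)$ close to $z\sp *$ which, for $t$ sufficiently near $t\sp *$, still satisfies the ball condition $|\cdot-f_n(t)|<3\varepsilon_n$ and so belongs to $F_{n+1}(t)\cap N(z\sp *)$.
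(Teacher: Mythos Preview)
Your proposal is essentially correct and follows the classical two-stage scheme for Michael's theorem; the only imprecision is the bound $|f_{n+1}(t)-f_n(t)|\leq 3\varepsilon_n$, which should read $|f_{n+1}(t)-f_n(t)|<3\varepsilon_n+\varepsilon_{n+1}$, since $f_{n+1}$ is only an $\varepsilon_{n+1}$-approximate selection of $F_{n+1}$ rather than an exact one. This is harmless for the Cauchy argument, as the corrected bounds are still summable.

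As for comparison with the paper: there is nothing to compare. The paper does not prove Theorem~\ref{thm:Michael} at all; it is quoted as a known result from \cite{Aubin-Cellina} (``It can be found in a more general form in e.g.\ \cite{Aubin-Cellina}'') and used as a black box in the proof of Lemma~\ref{lem:lowerbound}. Your proof therefore goes strictly beyond what the paper supplies.
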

 
\begin{lemma}\label{lem:lowersemicont}
Let $\beta$ be any positive number, $x_n$ any element in $\Re\sp d$,  the terminal cost $g$ and the Hamiltonian $H$ be  continuously
differentiable, and $H$ be concave in its second argument and satisfy \eqref{eq:Hamiltonianestimates}. 
Let $x:[0,T]\rightarrow\Re\sp d$ be a minimizer for the value
$V(x_s,0)$, defined in \eqref{eq:V}.
Let $A(t)$ for $t_n < t < t_{n+1}$ be the set of  all $\alpha\in
\Re\sp d$, such that the following conditions are satisfied:
\begin{equation}\label{eq:alphabound}
|\alpha-x'(t)|\leq C_2|x_n-x(t)|+\beta,
\end{equation}
and
\begin{equation}\label{eq:lambdabound}
H(x_n,\lambda)\leq
\alpha\cdot\lambda+L\big(x(t),x'(t)\big)+C_2|x_n-x(t)|,\quad  \text{for all } \lambda\in\Re\sp d.
\end{equation}
The set-valued function $A(t)$ is lower semicontinuous.
\end{lemma}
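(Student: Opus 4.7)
My first step is to confirm that for each $t \in (t_n, t_{n+1})$ the set $A(t)$ is nonempty, closed, and convex. Nonemptiness follows from Lemma \ref{lem:alphaexistence} applied with $x_1 = x(t)$, $\alpha_1 = x'(t)$, and $x_2 = x_n$. The ball condition \eqref{eq:alphabound} is visibly closed and convex in $\alpha$. Condition \eqref{eq:lambdabound} is an intersection of the closed half-spaces $\{\alpha : H(x_n,\lambda) - \alpha\cdot\lambda \leq L(x(t),x'(t)) + C_2|x_n - x(t)|\}$ indexed by $\lambda \in \Re^d$, hence closed and convex; equivalently, by the Legendre identity \eqref{eq:LegendreH} it is the sublevel set $\{\alpha : L(x_n,\alpha) \leq L(x(t),x'(t)) + C_2|x_n - x(t)|\}$ of the convex lower semicontinuous function $L(x_n,\cdot)$.

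Before turning to lower semicontinuity, I note a regularity consequence of the differentiability hypotheses. Since $H$ and $g$ are continuously differentiable, Theorem \ref{thm:optsolsolvehamsyst} furnishes a dual $\lambda$ such that $x$ and $\lambda$ solve the Hamiltonian system \eqref{eq:HamSyst}. Consequently $x'(t) = H_\lambda(x(t),\lambda(t))$ is continuous on $(t_n, t_{n+1})$, and the Legendre relation gives $L(x(t),x'(t)) = \lambda(t)\cdot x'(t) - H(x(t),\lambda(t))$, also continuous in $t$. These continuities are what make pointwise lower semicontinuity of $A$ tractable.

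For the LSC condition at a point $t^* \in (t_n, t_{n+1})$, fix $\alpha^* \in A(t^*)$ and $\eta > 0$. The plan is to construct $\alpha_\epsilon(t) \in A(t)$ with $|\alpha_\epsilon(t) - \alpha^*| < \eta$ for $t$ in a neighborhood of $t^*$, by convex-combining $\alpha^*$ with an ``interior'' reference point $\alpha_\circ \in A(t^*)$. To produce $\alpha_\circ$, I apply Lemma \ref{lem:alphaexistence} at $t^*$ with the slack $\beta$ replaced by $\beta/2$, yielding $\alpha_\circ$ that lies strictly inside the ball \eqref{eq:alphabound} at $t^*$ with margin at least $\beta/2$ while still satisfying \eqref{eq:lambdabound} at $t^*$. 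Setting $\alpha_\epsilon = (1-\epsilon)\alpha^* + \epsilon \alpha_\circ$, the distance $|\alpha_\epsilon - \alpha^*| = \epsilon|\alpha_\circ - \alpha^*|$ is below $\eta$ for $\epsilon$ small, since $\alpha_\circ$ and $\alpha^*$ both sit in a uniformly bounded set (using $|x'(t^*)| \leq C_1$ and \eqref{eq:alphabound}). Convexity of the ball gives $\alpha_\epsilon$ an $\epsilon\beta/2$ strict margin in \eqref{eq:alphabound} at $t^*$; combined with the continuity of $x$ and $x'$, this absorbs the $o(1)$ error from moving $t$ and yields \eqref{eq:alphabound} at nearby $t$. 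Convexity of $L(x_n,\cdot)$ together with continuity of $L(x(\cdot),x'(\cdot))$ similarly controls \eqref{eq:lambdabound} at $t$ up to an $o(1)$ error.

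The main technical obstacle is that the convex-combination strategy does not automatically furnish strict slack in \eqref{eq:lambdabound} at $t^*$: both $\alpha^*$ and $\alpha_\circ$ may saturate the $L$-bound, so convexity only yields equality there, and the $o(1)$ perturbation from varying $t$ is not directly absorbed. I would close this gap by refining the choice of $\alpha_\circ$ so that it enjoys a definite strict margin in \eqref{eq:lambdabound} as well. Such a margin can be extracted from the Rockafellar-style separation applied to the convex functions \eqref{eq:collconvfunc} in the proof of Lemma \ref{lem:alphaexistence}: the inequality $\sum_i l_i f_i \geq \sigma$ produced there gives a strict gap of size $\sigma > 0$, leading to $L(x_n,\alpha_\circ) \leq L(x(t^*),x'(t^*)) + C_2|x_n - x(t^*)| - \sigma$; the resulting $\epsilon\sigma$ slack in $\alpha_\epsilon$ then dominates the $o(1)$ perturbation for $t$ near $t^*$, completing the argument.
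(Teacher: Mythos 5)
You have correctly identified the one place where the argument is genuinely delicate — the lack of strict slack in \eqref{eq:lambdabound} — but the fix you propose does not work in the case where that difficulty actually occurs. Setting $\lambda=0$ in \eqref{eq:lambdabound} shows that any $\alpha\in A(t^*)$ forces $H(x_n,0)\leq L\big(x(t^*),x'(t^*)\big)+C_2|x_n-x(t^*)|$, and this can hold with \emph{equality} (e.g.\ when $x_n=x(t^*)$ and $x'(t^*)=H_\lambda(x(t^*),0)$, which happens whenever the dual $\lambda(t^*)$ vanishes). In that degenerate case the constraint \eqref{eq:lambdabound} at $t^*$ reads $H(x_n,\lambda)-H(x_n,0)\leq \alpha\cdot\lambda$ for all $\lambda$, i.e.\ $\alpha$ must be the (unique, by differentiability) supergradient $H_\lambda(x_n,0)$, and the constraint is saturated at $\lambda=0$ for that $\alpha$. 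No $\alpha_\circ$ with a positive margin $\sigma$ can exist, and the Rockafellar separation you invoke cannot supply one: the strict gap $\varepsilon>0$ in alternative b) of Theorem \ref{thm:Rockafellar} is obtained in the proof of Lemma \ref{lem:alphaexistence} only under the strict inequality $H(x_2,0)<L(x_1,\alpha_1)+C_2|x_2-x_1|$ (the equality case there is handled by a separate $\gamma$-perturbation and limit, which destroys the margin). So your unified convex-combination scheme covers only the nondegenerate situation; this is essentially Case 2 of the paper's proof, where your $\beta/2$ refinement of the ball margin is a clean (and arguably tidier) way to absorb the $o(1)$ perturbation in \eqref{eq:alphabound}.

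To close the degenerate case you need a different mechanism, which is what the paper's Case 1 provides: since $A(t)$ is nonempty (Lemma \ref{lem:alphaexistence}) and, by \eqref{eq:Lminor}, the right-hand side of \eqref{eq:lambdabound} at nearby $t$ is no smaller than at $t^*$, the point $\alpha^*=H_\lambda(x_n,0)$ still satisfies \eqref{eq:lambdabound} at $t$ — only the ball condition may fail, and by at most $o(1)$. One then uses convexity of the set defined by \eqref{eq:lambdabound}: the segment from any $a\in A(t)$ to $\alpha^*$ stays in that set, so its last point inside the ball of radius $C_2|x_n-x(t)|+\beta$ about $x'(t)$ lies in $A(t)$ and is within the tangent distance \eqref{eq:maxdist} of $\alpha^*$, which tends to zero as $t\to t^*$. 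Your proposal as written is missing this branch entirely, so the proof is incomplete rather than wrong in approach.
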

\begin{proof}
We use the notation $\alpha(t)=x'(t)$. By Theorem
\ref{thm:optsolsolvehamsyst} we know that $x$ solves the Hamiltonian
system \eqref{eq:HamSyst}. Hence $\alpha$ is continuous.
Let $t_n < t\sp * <t_{n+1}$ and $\alpha\sp *$ be an element in $A(t\sp
*)$. 
Similarly as in the proof of Lemma \ref{lem:alphaexistence}, we have
that
\begin{equation}\label{eq:ineqtwocases}
H(x_n,0) \leq L\big(x(t\sp *),\alpha(t\sp *)\big) + C_2|x_n-x(t\sp *)|.
\end{equation} 
We first consider the case where there is equality in \eqref{eq:ineqtwocases}, and
later the case of strict inequality.

%and consider the two
%following cases.
\emph{Case 1.} We assume here that 
\begin{equation}\label{eq:equalityassumption}
H(x_n,0)= L\big(x(t\sp *),\alpha(t\sp *)\big)+C_2|x_n-x(t\sp *)|,
\end{equation} 
and let $t_n < t<t_{n+1}$.
%We will show that there exists an element in $A(t)$ at a distance less
%than or equal to KALLE, which establishes the lower semicontinuity at
%$t\sp *$.
We know by Lemma \ref{lem:alphaexistence} that $A(t)$ is nonempty. In order for equation
\eqref{eq:lambdabound} to be satisfied for an element in $A(t)$, we
must have
\begin{equation}\label{eq:Lminor}
L\big(x(t),\alpha(t)\big)+C_2|x_n-x(t)| \geq H(x_n,0)=L\big(x(t\sp
 *),\alpha(t\sp *)\big)+C_2|x_n-x(t\sp *)|.
\end{equation}

We now show that there exists an element in $A(t)$ 
%whose distance from
%$\alpha\sp *$ is less than or equal to $|\alpha(t)-\alpha\sp
%*|$. 
which is close to $\alpha\sp *$ when $|t-t\sp *|$ is small.
First note that since $\alpha\sp *$ satisfies
\eqref{eq:lambdabound} at $t\sp *$, and we have assumption
\eqref{eq:equalityassumption}, it follows that $\alpha\sp
*=H_\lambda(x_n,0)$. By equation \eqref{eq:Lminor} it follows
that $\alpha\sp *$ satisfies
\eqref{eq:lambdabound}
also at $t$. The set of $\alpha$:s that solve \eqref{eq:lambdabound}
with $t$ fixed is convex. This convexity together with the fact that
$\alpha\sp *$ satisfies \eqref{eq:lambdabound}, and the existence of 
an element in $A(t)$, by Lemma \ref{lem:alphaexistence}, 
%we know that there exists 
implies that there exists an element in $A(t)$ which is not farther
away from $\alpha\sp *$ than 
\begin{equation}\label{eq:maxdist}
\sqrt{|\alpha\sp *-\alpha(t)|\sp 2 -
  C_2\sp 2 (|x_n-x(t)|+\beta)\sp 2}, 
\end{equation}
see Figure \ref{fig:circle}.
\begin{figure}
\centering
\includegraphics[width=0.5\textwidth]{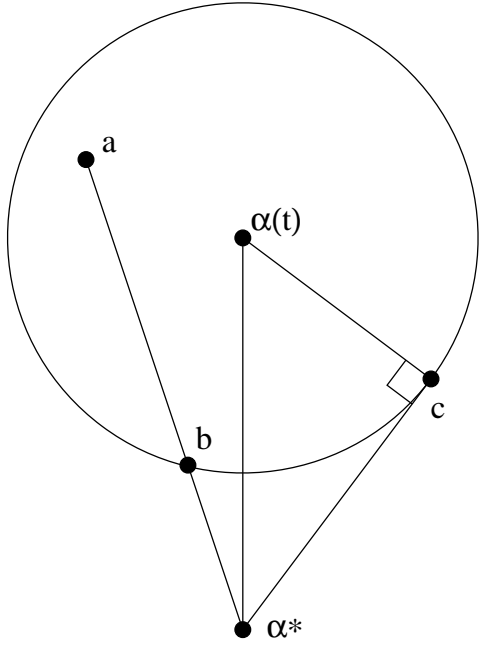}
\caption{The circle has radius $C_2|x_n-x(t)|+\beta$. When the dimension
  $d$ is two, the set $A(t)$ therefore
  consists of the points within the circle that satisfies
  \eqref{eq:lambdabound}. Since the set of points that satisfy this
  equation is convex, and $\alpha\sp *$ is one of them, it follows
  that if the point $a$ in the figure belongs to $A(t)$, then so does
  $b$. We therefore see that there exists a point in $A(t)$ that is no farther
  away from $\alpha\sp *$ than $c$, i.e.\ a distance $\sqrt{|\alpha\sp *-\alpha(t)|\sp 2 -
  C_2\sp 2 (|x_n-x(t)|+\beta)\sp 2}$.
%The distance from $\alpha\sp *$ to
%  $c$ is clearly smaller than or equal to (if the radius is zero) the
%  distance between $\alpha\sp *$ and $\alpha(t)$. 
  The situation in
  higher dimension is analogous.}
\label{fig:circle}
\end{figure}
We now use that 
\begin{equation*}
|\alpha(t)-\alpha\sp *| \leq |\alpha(t)-\alpha(t\sp *)| + |\alpha(t\sp
 *)-\alpha\sp *| \leq C_2|x_n-x(t\sp *)|+\beta + o(|t-t\sp *|)
\end{equation*}
and
\begin{equation*}
|x_n-x(t)| \geq |x_n-x(t\sp *)|-|x(t\sp *)-x(t)| \geq |x_n-x(t\sp *)|
- O(|t-t\sp *|)
\end{equation*}
in \eqref{eq:maxdist} to see that we can make the distance from
$\alpha\sp *$ to an element in $A(t)$ arbitrarily small if $|t-t\sp
*|$ is small. The case where the dimension $d=1$ can not be treated as
in Figure \ref{fig:circle}, but then the same line of reasoning shows
that there is an element in $A(t)$ which is a distance 
\begin{equation*}
\max\{|\alpha(t)-\alpha\sp*|-C|x_n-x(t)|,0\}
\end{equation*} 
from $\alpha\sp *$. It
follws that $A$ is lower semicontinuous at $t\sp *$.

\emph{Case 2.} We assume  here that $H(x_n,0) < L\big(x(t\sp *),\alpha(t\sp
*)\big) + C_2|x_n-x(t\sp *)|$.  As in the proof of Lemma
\ref{lem:alphaexistence} there exists an $\alpha\sp \#$ which satisfies 
$|\alpha\sp\# -\alpha(t\sp *)| \leq C_2|x_n-x(t\sp*)|+\beta$, and an
$\varepsilon > 0$ such that 
\begin{equation*}
\alpha\sp\# \cdot
\lambda+L\big(x(t\sp*),\alpha(t\sp*)\big)+C_2|x_n-x(t\sp*)| \leq
H(x_n,\lambda) + \varepsilon,\quad \text{for all } \lambda\in\Re\sp d.
\end{equation*} 
The linear combination $\xi\alpha\sp\#+(1-\xi)\alpha\sp*$, where
$0\leq \xi\leq 1$, satisfies
\begin{equation*}
\big(\xi\alpha\sp\#+(1-\xi)\alpha\sp*\big)\cdot\lambda +
L\big(x(t\sp*),\alpha(t\sp*)\big) + C_2|x_n-x(t\sp*)| \leq
H(x_n,\lambda)+\xi\varepsilon, 
\end{equation*}
for all $\lambda\in\Re\sp d.$
%There is a constant $A$ such that
Along the optimal path  the running cost satisfies 
\[
L\big(x(t),\alpha(t)\big) =
H\big(x(t),\lambda(t)\big)-\lambda(t)\cdot H_\lambda\big(x(t),\lambda(t)\big).
\]
This follows by the fact that
$\alpha(t)=H_\lambda\big(x(t),\lambda(t)\big)$.
Since the Hamiltonian is assumed to be continuously differentiable, we
therefore have
\begin{equation*}
\Big|L\big(x(t),\alpha(t)\big)+C|x_n-x(t)|-\Big(L\big(x(t\sp
*),\alpha(t\sp *)\big)+C|x_n-x(t\sp *)|\Big)\Big| \leq o(|t-t\sp *|).
\end{equation*}
%Let $\xi=A|t-t\sp *|/\varepsilon$, and define $\hat\alpha$ by
%$\hat\alpha=\xi\alpha\sp\#+(1-\xi)\alpha\sp *$. Then 
If $\xi$ is small enough, $\hat\alpha=\xi\alpha\sp\#+(1-\xi)\alpha\sp
*$ therefore satisfies
\begin{equation*}
\hat\alpha\cdot\lambda +L\big(x(t),\alpha(t)\big)+C_2|x_n-x(t)| \leq
H(x_n,\lambda), \quad \text{for all }\lambda\in\Re\sp d
\end{equation*}
Since $\hat\alpha$ satisfies equation \eqref{eq:lambdabound}, similar
reasoning  as in case 1 shows that the distance from $\alpha\sp *$ to
an element in $A(t)$ can be made arbitrarily small when $|t-t\sp *|$
is made small.
\end{proof}
We are now ready to state the error estimate.
\begin{lemma}\label{lem:lowerbound}
Let the Hamiltonian $H$ be continuously differentiable, concave in its
second argument, and satisfy
\eqref{eq:Hamiltonianestimates}, and the terminal cost $g$ be
continuously differentiable with $|g'(x)|\leq C_3$, for all
$x\in\Re\sp d$. Then
\begin{equation}\label{eq:thmlowerbound}
u(x_s,0)-\bar u(x_s,0) \geq -\frac{C_1C_2T}{2}\big((e\sp{C_2T}-1)\Delta t+\Delta t\sp
2\big)-\frac{C_1C_3}{2}(e\sp{C_2T}-1)\Delta t.
\end{equation}
\end{lemma}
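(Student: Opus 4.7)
The plan is to show $\bar u(x_s,0) \leq u(x_s,0) + (\text{the stated right-hand side})$ by exhibiting a particular discrete admissible control whose cost is close to the continuous optimal cost. By Theorem \ref{thm:representation}, $u(x_s,0)=V(x_s,0)$, and by Theorem \ref{thm:optsolsolvehamsyst} there exists a minimizer $x^*:[0,T]\to\Re\sp d$ with $x^*(0)=x_s$; writing $\alpha^*(t)=(x^*)'(t)$ one has $u(x_s,0)=\int_0\sp T L\big(x^*(t),\alpha^*(t)\big)\,dt+g\big(x^*(T)\big)$ and $|\alpha^*|\leq C_1$ (from $|H_\lambda|\leq C_1$ in \eqref{eq:Hamiltonianestimates}).

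I would build $(x_n,\alpha_n)$ recursively from $x_0=x_s$, using Michael's Selection Theorem on each time interval. Fix a slack $\beta>0$, to be sent to zero at the end. Given $x_n$, the set-valued map $A(t)$ on $(t_n,t_{n+1})$ associated with $x_n$, with $x^*$, and with $\beta$ has nonempty values (Lemma \ref{lem:alphaexistence}), closed convex values (intersection of a ball with a family of half-spaces parametrized by $\lambda$), and is lower semicontinuous (Lemma \ref{lem:lowersemicont}). Theorem \ref{thm:Michael} then yields a continuous selection $\tilde\alpha(t)\in A(t)$, and I set
\[
\alpha_n:=\frac{1}{\Delta t}\int_{t_n}\sp{t_{n+1}}\tilde\alpha(t)\,dt,\qquad x_{n+1}:=x_n+\Delta t\,\alpha_n.
\]
Taking the supremum over $\lambda$ in \eqref{eq:lambdabound} gives $L(x_n,\tilde\alpha(t))\leq L(x^*(t),\alpha^*(t))+C_2|x_n-x^*(t)|$, and Jensen's inequality for the convex function $\alpha\mapsto L(x_n,\alpha)$ yields $\Delta t\,L(x_n,\alpha_n)\leq\int_{t_n}\sp{t_{n+1}}L(x_n,\tilde\alpha(t))\,dt$. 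Summing over $n$ and appending $g(x_N)\leq g(x^*(T))+C_3|x_N-x^*(T)|$ (from $|g'|\leq C_3$) gives
\[
\bar u(x_s,0)\leq u(x_s,0)+C_2\sum_{n=0}\sp{N-1}\int_{t_n}\sp{t_{n+1}}|x_n-x^*(t)|\,dt+C_3|x_N-x^*(T)|.
\]

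To close the argument, set $e_n:=|x_n-x^*(t_n)|$, so that $e_0=0$. The bound \eqref{eq:alphabound} on $\tilde\alpha-\alpha^*$ fed into the telescoping identity for $x_{n+1}-x^*(t_{n+1})$, combined with $|x_n-x^*(s)|\leq e_n+C_1(s-t_n)$ on each interval, produces the recurrence $e_{n+1}\leq(1+C_2\Delta t)e_n+\tfrac12 C_1C_2\Delta t\sp 2+\beta\Delta t$. A discrete Gronwall argument then bounds $e_n$ by $(C_1\Delta t/2+\beta/C_2)(e\sp{C_2T}-1)$, and substituting into the two error terms above and letting $\beta\to0\sp+$ reproduces the right-hand side of \eqref{eq:thmlowerbound} after routine bookkeeping. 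The main obstacle will be the measurable-selection step: Jensen's inequality cannot be invoked unless $\tilde\alpha$ is at least measurable, which is precisely why $A(t)$ was defined with the positive slack $\beta$ and why the lower-semicontinuity result Lemma \ref{lem:lowersemicont} was proved; the slack $\beta$ is then harmlessly driven to zero only after the Gronwall step.
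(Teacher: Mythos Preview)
Your proposal is correct and follows essentially the same approach as the paper: construct discrete controls by averaging continuous selections (obtained via Lemmas \ref{lem:alphaexistence} and \ref{lem:lowersemicont} and Michael's Theorem) over each subinterval, invoke Jensen's inequality for the convex map $\alpha\mapsto L(x_n,\alpha)$, run the discrete Gronwall argument to bound $|x_n-x^*(t_n)|$, and finally let $\beta\to 0$. The paper's proof is organized in the same steps (trajectory bound, running-cost bound, terminal-cost bound), with only cosmetic differences in notation and ordering.
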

\begin{proof}
As in Lemma \ref{lem:lowersemicont} we let $x:[0,T]\rightarrow\Re\sp
d$ be a minimizer for the value $V(x_s,0)$, defined in \eqref{eq:V},
and denote $x'(t)$ by $\alpha(t)$. It is easy to check that the
set-valued function $A(t)$ in Lemma \ref{lem:lowersemicont} has closed
and convex values. Hence Theorem \ref{thm:Michael} and Lemma
\ref{lem:lowersemicont} show that for every $\beta>0$ and any
$x_n\in\Re\sp d$, there exists a
measurable function   $\tilde\alpha(t)$, such that for $t_n < t
<t_{n+1}$
\begin{equation*}
\begin{split}
|\tilde\alpha(t)-\alpha(t)| &\leq C_2|x_n-x(t)|+\beta,\\
L\big(x_n,\tilde\alpha(t)\big)&\leq L\big(x(t),\alpha(t)\big)+C_2|x_n-x(t)|.
\end{split}
\end{equation*}
We now define the vectors $\{\tilde x_n\}_{n=0}\sp{N-1}$ and $\{\tilde\alpha_n\}_{n=0}\sp{N-1}$
as
follows. Let $\tilde x_0=x_s$. Then there exists a
$\tilde\alpha:(t_0,t_1)\rightarrow\Re\sp d$ as above. 
Let
\begin{equation*}
\tilde\alpha_0=\frac{1}{\Delta t}\int_{t_0}\sp{t_1} \tilde\alpha(t)dt,
\end{equation*}
and $\tilde x_1=\tilde x_0 + \Delta t\tilde\alpha_0$.
This $\tilde x_1$ now takes the role of $x_1$ in Lemma
\ref{lem:lowersemicont}. We can therefore find
$\tilde\alpha:(t_1,t_2)\rightarrow\Re\sp d$, and define
$\tilde\alpha_1$ and $\tilde x_2$ as before, and then iterate this
process further.

\emph{Step 1.} This step consists of a proof that 
\begin{equation}\label{eq:step1}
\max_{0\leq n\leq N}|x(t_n)-\tilde x_n|\leq(e\sp{CT}-1)\big(\frac{A}{2}\Delta t +\delta/C\big).
\end{equation} 
To
see this, we write 
\begin{equation}\label{eq:xdiffevol}
x(t_{n})-\tilde x_{n}=x(t_{n-1})-\tilde x_{n-1}+\int_{t_{n-1}}\sp{t_{n}}\big(\alpha(t)-\tilde\alpha(t)\big)dt.
\end{equation}
We now use that 
\begin{equation}\label{eq:ab}
|\alpha(t)-\tilde\alpha(t)| \leq C_2|\tilde x_{n-1}-x(t)|+\beta \leq C_2|x(t_{n-1})-\tilde x_{n-1}|
 + C_2|x(t)-x(t_{n-1})|+\beta,
\end{equation}
when $t_{n-1} \leq t <t_{n}$. We now use that $|x'(t)| \leq C_1$ by
\eqref{eq:Hamiltonianestimates}, and \eqref{eq:ab} in \eqref{eq:xdiffevol} to obtain
\begin{equation*}
\begin{split}
&|x(t_{n})-\tilde x_{n}| \leq (1+C_2\Delta t)|x(t_{n-1})-\tilde x_{n-1}|
 +\frac{C_1 C_2}{2}\Delta t\sp 2 +\beta\Delta t\\
 &\leq (1+C_2\Delta t)\sp 2 |x(t_{n-2})-\tilde x_{n-2}|+\big((1+C_2\Delta
 t)+1\big)\big(\frac{C_1 C_2}{2}\Delta t\sp 2+\beta\Delta t\big) \\
&\leq\ldots\leq (1+C_2\Delta t)\sp{n}|x(0)-\tilde x_0|\\ 
&+ 
\big((1+C_2\Delta t)\sp {n-1} + (1+C_2\Delta t)\sp{n-2}
 +\ldots+1\big)\big(\frac{C_1 C_2}{2}\Delta t\sp 2+\beta\Delta t\big)\\
&= \frac{(1+C_2\Delta t)\sp{n}-1}{C_2\Delta t}\big(\frac{C_1 C_2}{2}\Delta
 t\sp 2+\beta\Delta t\big) =\big((1+C_2\Delta
 t)\sp{n}-1\big)\big(\frac{C_1}{2}\Delta t+\beta/C_2\big)\\
&\leq e\sp{C_2 Tn/N}\big(\frac{C_1}{2}\Delta t+\beta/C_2\big) \leq 
(e\sp{C_2 T}-1)\big(\frac{C_1}{2}\Delta t +\beta/C_2 \big),
\end{split}
\end{equation*}
where the second last inequality follows from $\Delta
t=T/N$, and 
the second last equality follows from $x(0)=\tilde x_0$, and the
formula for a geometrical sum. 
%We therefore have, using that 
%\begin{equation*}
%|x(T)-x_N|\leq\big((1+C\Delta t)\sp N-1\big)\sp
% N\big(\frac{A}{2}\Delta t +\delta/C\big) \leq (e\sp{CT}-1)\big(\frac{A}{2}\Delta t +\delta/C\big).
%\end{equation*}

\emph{Step 2.} We now provide a  lower bound for the term
$\int_0\sp T L\big(x(t),\alpha(t)\big)dt$, present in $u(x_s,0)$. For this
purpose we use that for $t_n\leq t <t_{n+1}$,
\begin{multline}\label{eq:Llowerbound}
L\big(\tilde x_n,\tilde\alpha(t)\big) \leq
L\big(x(t),\alpha(t)\big)+C_2|\tilde x_n-x(t)|\\
\leq L\big(x(t),\alpha(t)\big)+C_2|\tilde x_n-x(t_n)|+C_2|x(t)-x(t_n)|.
\end{multline}
From the boundedness of $|x'|\leq C_1$, we have 
\begin{equation}\label{eq:absxdiff}
\int_{t_n}\sp{t_{n+1}}|x(t)-x(t_n)|dt \leq \frac{C_1}{2}\Delta t\sp 2
\end{equation}
We now use the result from step 1, equation \eqref{eq:step1}, together
with \eqref{eq:Llowerbound} and \eqref{eq:absxdiff}:
\begin{multline}\label{eq:step2}
\int_0\sp T L\big(x(t),\alpha(t)\big)dt \geq
%\sum_{n=0}\sp{N-1}\Big(\int_{t_n}\sp{t_{n+1}}
%L\big(x_n,\tilde\alpha(t)\big)- C\Delta t 
%(e\sp{CT}-1)\big(\frac{A}{2}\Delta t +\delta/C\big) - C\int_{t_n}\sp{t_{n+1}}|x(t)-x(t_n)|dt
\sum_{n=0}\sp{N-1}\Big(\int_{t_n}\sp{t_{n+1}}L\big(\tilde x_n,\tilde\alpha(t)\big)dt\Big)\\
- C_2 T(e\sp{C_2 T}-1)\big(\frac{C_1}{2}\Delta t
+\beta/C_2\big)-\frac{C_1 C_2 T}{2}\Delta t\sp 2\\
\geq \Delta t\sum_{n=0}\sp{N-1}L(\tilde x_n,\tilde\alpha_n)-C_2
T(e\sp{C_2 T}-1)\big(\frac{C_1}{2}\Delta t
+\beta/C_2\big)-\frac{C_1 C_2 T}{2}\Delta t\sp 2
\end{multline}

\emph{Step 3.} Since 
\begin{equation*}
\bar u(x_s,0) \leq \Delta t\sum_{n=0}\sp{N-1} L(\tilde x_n,\tilde\alpha_n)+g(\tilde x_N)
\end{equation*} 
we can now achieve the desired lower bound, 
\begin{multline*}
u(x_s,0)-\bar u(x_s,0)\\ 
\geq \Delta t\sum_{n=0}\sp{N-1}L(\tilde x_n,\tilde\alpha_n)-C_2
T(e\sp{C_2 T}-1)\big(\frac{C_1}{2}\Delta t
+\beta/C_2\big)-\frac{C_1 C_2 T}{2}\Delta t\sp 2 +g\big(x(T)\big)\\ 
- \big(\Delta t\sum_{n=0}\sp{N-1} L(\tilde x_n,\tilde\alpha_n)+g(\tilde x_N)\big)\\
\geq -C_2 T(e\sp{C_2 T}-1)\big(\frac{C_1}{2}\Delta t
+\beta/C_2 \big)-\frac{C_1C_2T}{2}\Delta t\sp 2 - C_3 |x(T)-\tilde x_N|\\
\geq -C_2T(e\sp{C_2T}-1)\big(\frac{C_1}{2}\Delta t
+\beta/C_2\big)-\frac{C_1C_2T}{2}\Delta t\sp 2 - C_3(e\sp{C_2T}-1)\big(\frac{C_1}{2}\Delta t
+\beta/C_2\big).
\end{multline*}
Since the number $\beta$ can be made arbitrarily small, the lower
bound \eqref{eq:thmlowerbound} follows.
\end{proof}

\section{Lower bound of $\bar u(x_s,0)-u(x_s,0)$}\label{sec:low2}
In order to be able to prove this lower bound, we first need to
establish a bound on the discrete dual variable.

\begin{lemma}\label{lem:boundedlambda}
Suppose $g:\Re\sp d \rightarrow \Re$ is 
%locally semiconcave, 
differentiable
and
satisfies $|g'(x)|\leq C_3$ for all $x\in\Re\sp d$. Suppose $H:\Re\sp d\times\Re\sp
d\rightarrow \Re$ is continuously differentiable and satisfies \eqref{eq:Hamiltonianestimates}.
%with 
%$|H_x(x,\lambda)|\leq
%B(1+|\lambda|)$. 
Let $\{x_n,\lambda_n\}_{n=0}\sp{N}$ be an optimal discrete solution
and dual which satisfy \eqref{eq:SymplPontr}. 
Then 
\begin{equation}\label{eq:boundedlambda}
|\lambda_n|\leq (C_3+1) e\sp{C_2T}-1.
\end{equation}
\end{lemma}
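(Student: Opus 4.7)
The plan is to run a backward induction on $n$, starting from $n = N$ where $|\lambda_N| = |g'(x_N)| \leq C_3$, and to show that each step of the discrete dual recursion in \eqref{eq:SymplPontr} amplifies $|\lambda_{n+1}|$ by at most a factor $(1 + C_2 \Delta t)$ plus an additive $C_2 \Delta t$ term. The Gronwall-type accumulation of these amplifications, combined with $(1+C_2\Delta t)^N \leq e^{C_2 T}$, will give the claimed bound.

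First I would extract a pointwise bound on $H_x$ from the second line of \eqref{eq:Hamiltonianestimates}. Since $|H(x_1,\lambda)-H(x_2,\lambda)| \leq C_2|x_1-x_2|(1+|\lambda|)$ for all $x_1,x_2,\lambda \in \Re^d$, and $H$ is continuously differentiable in $x$, we get $|H_x(x,\lambda)| \leq C_2(1+|\lambda|)$ for every $(x,\lambda)$. Inserting this in the recursion $\lambda_n = \lambda_{n+1} + \Delta t\, H_x(\lambda_{n+1},x_n)$ yields
\begin{equation*}
|\lambda_n| \leq (1+C_2\Delta t)|\lambda_{n+1}| + C_2\Delta t.
\end{equation*}

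Next I would iterate this one-step estimate backwards from $n=N$. Writing $k = N-n$, a straightforward induction gives
\begin{equation*}
|\lambda_{N-k}| \leq (1+C_2\Delta t)^k\,|\lambda_N| + C_2\Delta t\sum_{j=0}^{k-1}(1+C_2\Delta t)^j,
\end{equation*}
and summing the geometric series collapses the right-hand side to $(C_3+1)(1+C_2\Delta t)^k - 1$, after using $|\lambda_N|\leq C_3$. Finally, since $k \leq N$ and $\Delta t = T/N$, the standard estimate $(1+C_2 T/N)^N \leq e^{C_2 T}$ yields $|\lambda_n| \leq (C_3+1)e^{C_2 T} - 1$, which is \eqref{eq:boundedlambda}.

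There is no real obstacle here; the only things to be slightly careful about are (i) applying the Lipschitz-in-$x$ bound on $H$ to deduce the pointwise bound on $H_x$, which is immediate by differentiability, and (ii) keeping track of the geometric sum so that the $+1$ absorbs the additive $C_2 \Delta t$ terms cleanly into the factor $(C_3+1)$. Everything else is a direct Gronwall-style discrete argument.
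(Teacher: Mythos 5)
Your proposal is correct and follows essentially the same argument as the paper: the pointwise bound $|H_x(x,\lambda)|\leq C_2(1+|\lambda|)$ from \eqref{eq:Hamiltonianestimates}, the one-step estimate $|\lambda_n|\leq(1+C_2\Delta t)|\lambda_{n+1}|+C_2\Delta t$, backward iteration with a geometric sum, and $(1+C_2\Delta t)^N\leq e^{C_2T}$. The only difference is that you make explicit the derivation of the gradient bound, which the paper leaves implicit.
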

\begin{proof}
%The proof is almost identical to KALLE:

\begin{multline*}
|\lambda_n|=|\lambda_{n+1}-\Delta t H_x (x_n,\lambda_{n+1})| \leq
 (1+C_2\Delta t)|\lambda_{n+1}|+C_2\Delta t\\
\leq\ldots\leq(1+C_2\Delta t)\sp{N-n}|\lambda_N|+\big((1+C_2\Delta
 t)\sp{N-n-1}+\ldots +1\big)C_2\Delta t\\
\leq C_3(1+C_2\Delta t)\sp{N}+\frac{(1+C_2\Delta t)\sp N-1}{C_2\Delta
 t}C_2\Delta t \\
\leq C_3 e\sp{C_2T}+e\sp{C_2 T}-1.
\end{multline*}
\end{proof}

\begin{lemma}\label{thm:upperbound}
Let the conditions in Theorem \ref{thm:representation} be
satisfied. Let $g:\Re\sp d\rightarrow\Re$ be differentiable and
satisfy $|g'(x)|\leq C_3$, for every $x\in\Re\sp d$. Let $H$ be
continuously differentiable. Then, for any $x_s\in\Re\sp d$, 
\begin{equation*}
\bar u(x_s,0)- u(x_s,0) \geq -\half C_1C_2(C_3+1)e\sp{C_2 T}T\Delta t.
\end{equation*}
\end{lemma}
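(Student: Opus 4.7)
The plan is to mirror the proof of Lemma~\ref{lem:lowerbound}, but to run the argument from the discrete side to the continuous side: start from an optimal discrete trajectory, build a continuous competitor path from it, and then invoke the variational representation in Theorem~\ref{thm:representation} to obtain $u(x_s,0)\leq \bar u(x_s,0)+\text{error}$. First I would apply Theorem~\ref{thm:OptimEquivSP} to fix an optimal triple $(x_n,\alpha_n,\lambda_n)$ satisfying the symplectic Pontryagin scheme \eqref{eq:SymplPontr}, so that $\bar u(x_s,0)=\Delta t\sum_{n=0}\sp{N-1}L(x_n,\alpha_n)+g(x_N)$. Lemma~\ref{lem:boundedlambda} then gives the uniform bound $|\lambda_n|\leq M:=(C_3+1)e\sp{C_2T}-1$, and the Lipschitz estimate \eqref{eq:Hamiltonianestimates} together with $\alpha_n=H_\lambda(x_n,\lambda_{n+1})$ yields $|\alpha_n|\leq C_1$. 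Next, extend the discrete state by piecewise linear interpolation, $\tilde x(t)=x_n+(t-t_n)\alpha_n$ on $[t_n,t_{n+1}]$, which is absolutely continuous with $\tilde x(0)=x_s$, $\tilde x(T)=x_N$, and $|\tilde x(t)-x_n|\leq C_1(t-t_n)$. Since $\tilde x$ is admissible in \eqref{eq:V}, Theorem~\ref{thm:representation} gives
\begin{equation*}
u(x_s,0)\leq \sum_{n=0}\sp{N-1}\int_{t_n}\sp{t_{n+1}}L\big(\tilde x(t),\alpha_n\big)\,dt+g(x_N).
\end{equation*}

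The central estimate I aim to prove is
\begin{equation*}
L\big(\tilde x(t),\alpha_n\big)-L(x_n,\alpha_n)\leq C_2\big(1+|\lambda_{n+1}|\big)\,|\tilde x(t)-x_n|.
\end{equation*}
The starting point is the Legendre identity $L(x_n,\alpha_n)=-\alpha_n\cdot\lambda_{n+1}+H(x_n,\lambda_{n+1})$, which holds because $\alpha_n=H_\lambda(x_n,\lambda_{n+1})$ and $H(x_n,\cdot)$ is concave, so the supremum defining $L(x_n,\alpha_n)$ in \eqref{eq:LegendreL} is attained at $\lambda_{n+1}$. Combining this with concavity of $H$ in $\lambda$ and the Lipschitz bound in \eqref{eq:Hamiltonianestimates} produces, for every $\lambda\in\Re\sp d$,
\begin{equation*}
-\alpha_n\cdot\lambda+H\big(\tilde x(t),\lambda\big)\leq L(x_n,\alpha_n)+C_2|\tilde x(t)-x_n|\big(1+|\lambda|\big).
\end{equation*}
The delicate step is to take the supremum over $\lambda$ on the left and then replace the factor $1+|\lambda|$ on the right by $1+|\lambda_{n+1}|\leq 1+M$; this requires the a priori bound on the discrete dual together with a continuity-type argument ensuring that near-maximizers of the perturbed Legendre sup stay comparable to $\lambda_{n+1}$ when $|\tilde x(t)-x_n|$ is small.

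Once the central estimate is available, the rest is a routine summation. Integrating it over $[t_n,t_{n+1}]$ with $|\tilde x(t)-x_n|\leq C_1(t-t_n)$ and $|\lambda_{n+1}|\leq M$ gives
\begin{equation*}
\int_{t_n}\sp{t_{n+1}}L\big(\tilde x(t),\alpha_n\big)\,dt\leq \Delta t\,L(x_n,\alpha_n)+\tfrac12 C_1C_2(1+M)\Delta t\sp 2.
\end{equation*}
Summing over $n$, using $N\Delta t=T$ and $1+M=(C_3+1)e\sp{C_2T}$, and adding $g(x_N)=g(\tilde x(T))$ to both sides yields
\begin{equation*}
u(x_s,0)\leq \bar u(x_s,0)+\tfrac12 C_1C_2(C_3+1)e\sp{C_2T}T\Delta t,
\end{equation*}
which is equivalent to the stated lower bound on $\bar u(x_s,0)-u(x_s,0)$. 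I expect the main obstacle to be the central estimate, and specifically the task of controlling where the Legendre supremum moves when the base point shifts from $x_n$ to $\tilde x(t)$: the a priori bound $|\lambda_n|\leq M$ from Lemma~\ref{lem:boundedlambda} is what ultimately keeps the factor $1+|\lambda|$ from growing and is the source of the $(C_3+1)e\sp{C_2T}$ factor in the final error.
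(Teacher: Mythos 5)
Your overall strategy --- insert the piecewise-linear interpolant of the discrete optimal trajectory as a competitor in the variational representation \eqref{eq:V} and compare running costs termwise --- is genuinely different from the paper's, and it founders exactly at the step you yourself flag as delicate. The inequality $L(\tilde x(t),\alpha_n)\leq L(x_n,\alpha_n)+C_2(1+|\lambda_{n+1}|)\,|\tilde x(t)-x_n|$ is false in general, and no continuity argument based on Lemma \ref{lem:boundedlambda} can rescue it: the bound $|\lambda_{n+1}|\leq (C_3+1)e\sp{C_2T}-1$ concerns a maximizer of the sup defining $L(x_n,\alpha_n)$, and via the Fenchel--Young inequality it only controls $L(\tilde x(t),\alpha_n)$ from \emph{below}, namely $L(\tilde x(t),\alpha_n)\geq-\alpha_n\cdot\lambda_{n+1}+H(\tilde x(t),\lambda_{n+1})$. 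An upper bound would require controlling the near-maximizers of $\sup_\lambda\{-\alpha_n\cdot\lambda+H(\tilde x(t),\lambda)\}$, and these need not exist, be bounded, or be comparable to $\lambda_{n+1}$. Concretely, take $d=1$ and $H(x,\lambda)=\sin(x)\,\lambda$, which is concave (affine) in $\lambda$ and satisfies \eqref{eq:Hamiltonianestimates} with $C_1=C_2=1$. Then $L(x,\alpha)=0$ if $\alpha=\sin x$ and $L(x,\alpha)=+\infty$ otherwise, so $\alpha_n=\sin(x_n)$ along any finite-cost discrete trajectory, while $L(\tilde x(t),\alpha_n)=+\infty$ for a.e.\ $t\in(t_n,t_{n+1})$ because $\sin(\tilde x(t))\neq\sin(x_n)$ there. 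Your competitor path has infinite cost, and the chain of inequalities collapses to $u(x_s,0)\leq+\infty$. This phenomenon --- the effective domain of $L(x,\cdot)$ moving with $x$ --- is generic for control problems with constrained dynamics, which is the situation the extended-valued $L$ is designed to cover.

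The paper avoids this by never evaluating $L$ along the interpolant at the frozen control $\alpha_n$. It instead writes $\bar u(x_s,0)-u(x_s,0)=\sum_n\Delta t\,L(x_n,\alpha_n)+\int_0\sp T\frac{d}{dt}u\big(\bar x(t),t\big)\,dt$, bounds the integrand from below using the local semiconcavity of $u$, the viscosity supersolution inequality $p_t+H(\bar x(t),p_x)\geq 0$, and the Fenchel--Young inequality $p_x\cdot\alpha_n+\tilde L(\bar x(t),\alpha_n)\geq\tilde H(\bar x(t),p_x)$ --- all of which use only the ``sup'' direction of \eqref{eq:LegendreL} --- and, crucially, replaces $H$ by the compactified Hamiltonian $\tilde H(x,\lambda)=H(x,\lambda)-K\big(\max(|\lambda|-((C_3+1)e\sp{C_2T}-1),0)\big)\sp2$, so that the maximizer $\lambda\sp*$ in the definition of $\tilde L(\bar x(t),\alpha_n)$ is guaranteed to exist and to satisfy $|\lambda\sp*|\leq(C_3+1)e\sp{C_2T}-1+\varepsilon$. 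Lemma \ref{lem:boundedlambda} enters only to verify $\tilde L(x_n,\alpha_n)=L(x_n,\alpha_n)$ at the discrete optimum. If you want to keep your variational-competitor strategy, you would have to build the continuous path with a time-varying control $\tilde\alpha(t)$ chosen so that $L\big(\tilde x(t),\tilde\alpha(t)\big)\leq L(x_n,\alpha_n)+C_2|\tilde x(t)-x_n|$, in the spirit of Lemma \ref{lem:alphaexistence} and the measurable-selection argument of Section \ref{sec:low} run in the opposite direction; freezing the control at $\alpha_n$ cannot work.
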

\begin{proof}
The approximate value function is given by 
\begin{equation*}
\bar u(x_s,0)=\Delta t \sum_{n=0}\sp{N-1} L(x_n,\alpha_n)+g(x_N),
\end{equation*}
for  some $\{x_n,\alpha_n\}$.
The solution $\{x_n\}_{n=0}\sp{N}$ is extended to a piecewise linear
function $\bar x:[0,T]\rightarrow\Re\sp d$, defined by
\begin{equation*}
\bar x(t) = \frac{t-t_n}{\Delta t}x_{n+1}+\frac{t_{n+1}-t}{\Delta
  t}x_n, \quad\text{when }t_n\leq t \leq t_{n+1}.
\end{equation*}
Using this extended approximate solution, and the fact that  
\[
g(x_N)=u(x_N,T),
\] 
we can  represent the
difference between the original and the approximate value functions as
\begin{multline}\label{eq:upperstart}
\bar u(x_0,0)-u(x_0,0)=\Delta t\sum_{n=0}\sp{N-1}L(x_n,\alpha_n) +
u\big(\bar x(T),T\big)-u\big(\bar x(0),0\big) \\
=\Delta t\sum_{n=0}\sp{N-1}L(x_n,\alpha_n) +\int_0\sp T \frac{d}{dt}
u\big(\bar x(t),t\big)dt.
\end{multline}
The last equality in the above formula follows by the fact that the
function $t\mapsto u\big(\bar x(t),t\big)$ is absolutely continuous
since $x(t)$ is Lipschitz continuous, and $u$ is locally semiconcave.
That $u$ is locally semiconcave is shown in \cite{Cannarsa-Sinestrari}
for a slightly less general case than is considered here, but the
proof generalizes easily to the present conditions.

We now introduce the function $\tilde H:\Re\sp d\times\Re\sp
d\rightarrow\Re$, defined by
\begin{equation*}
\tilde H(x,\lambda) = H(x,\lambda)-
K\Big(\max\big(|\lambda|-((C_3+1)e\sp{C_2 T}-1),0\big)\Big)\sp 2,
\end{equation*}
where $K$ is a positive constant.
Define the corresponding running cost
\begin{equation}\label{eq:Ltilde}
\tilde L(x,\alpha)=\sup_{\lambda\in\Re\sp d}\big(-\alpha\cdot\lambda
+\tilde H(x,\lambda)\big).
\end{equation}
Because $\tilde H(x,\lambda) \leq H(x,\lambda)$ for all $x$ and
$\lambda$, it follows that $\tilde L(x,\alpha)\leq L(x,\alpha)$ for
all $x$ and $\alpha$. But  using Lemma \ref{lem:boundedlambda} we also
establish the opposite  inequality for the optimal state and control variables:
\begin{equation*}
\tilde L(x_n,\alpha_n) \geq -\alpha_n\cdot\lambda_{n+1} +\tilde
H(x_n,\lambda_{n+1}) =
-\alpha_n\cdot\lambda_{n+1}+H(x_n,\lambda_{n+1}) =L(x_n,\alpha_n).
\end{equation*}
Hence we may exchange $L$ with $\tilde L$ in \eqref{eq:upperstart}.

In order to bound the integral in \eqref{eq:upperstart} from above, we
now use the local semiconcavity of the value function $u$. This property
implies that the superdifferential $D\sp + u\big(\bar x(t),t\big)$ is
nonempty for all $0< t < T$. Let $p=(p_x,p_t)$ be an element in 
$D\sp + u\big(\bar x(t),t\big)$. 
%Then there exists a constant $K$ such
%that
%\begin{equation*}
%u
%\end{equation*}
Let $t\in(0,T)$ satisfy $t \neq t_n$ for $n=0,\ldots,N$, so that $\bar x$ is
differentiable at $t$. We split the difference quotient approximating
the backward derivative at $t$:
\begin{multline*}
\frac{u\big(\bar x(t),t\big)-u\big(\bar x(t-h),t-h\big)}{h}\\
=-\big[u\big(\bar x(t-h),t-h\big)-u\big(\bar
  x(t),t\big)-p_t(-h)-p_x\cdot\big(\bar x(t-h)-\bar x(t)\big)\big]/h
\\
+p_t+p_x\cdot\frac{\bar x(t)-\bar x(t-h)}{h}
\end{multline*}
The semiconcavity of $u$ implies that there is a constant $K$, such
that the quotient involving the square bracket above  is greater than
or equal to
\begin{equation*}
-K\big(h\sp 2+\big(\bar x(t)-\bar x(t-h)\big)\sp 2\big)/h.
\end{equation*}
When $h$ is small enough, the difference $\bar x(t)-\bar
x(t-h)=\alpha_n$, for some $n$. If we temporarily let $d/dt$ denote
the backward derivative, we therefore obtain, when letting $h\rightarrow 0$
\begin{equation}\label{eq:backder}
\frac{d}{dt} u\big(\bar x(t),t) \geq p_t + p_x\cdot\alpha_n.
\end{equation}
The double sided and the backward derivative of $u\big(\bar
x(t),t\big)$ differ on a set of measure zero, so there is no problem
in using the backward derivative in \eqref{eq:upperstart}. By the
relation \eqref{eq:Ltilde} between the running cost and the Hamiltonian, the
following inequality holds:
\begin{equation}\label{eq:Hamineq}
p_x\cdot\alpha_n + \tilde L\big(\bar x(t),\alpha_n\big) \geq \tilde H\big(\bar x(t),p_x\big).
\end{equation} 
The fact that $u$ is a viscosity solution of \eqref{eq:HJ} implies that
\begin{equation}\label{eq:pvisc}
p_t + H\big(\bar x(t),p_x\big)\geq 0.
\end{equation}
When combining equations \eqref{eq:backder}, \eqref{eq:Hamineq}, and
\eqref{eq:pvisc} with the error representation \eqref{eq:upperstart}, 
we find that 
\begin{multline}\label{eq:udifffin}
\bar u(x_0,0)-u(x_0,0)\geq \sum_{n=0}\sp{N-1}\Big(\Delta t
\tilde L(x_n,\alpha_n) - \int_{t_n}\sp{t_{n+1}} \tilde L\big(\bar
x(t),\alpha_n\big)dt\Big)\\
=\sum_{n=0}\sp{N-1}
\int_{t_n}\sp{t_{n+1}}\big(\tilde L(x_n,\alpha_n)-\tilde L(\bar x(t),\alpha_n)\big)dt.
\end{multline}
With the quadratic term added to the Hamiltonian,
the supremum in the definition of $\tilde L$  in \eqref{eq:Ltilde} is
always attained. Denote by $\lambda\sp *$ a maximum point in $\lambda$
for the coordinates $(\bar x(t),\alpha_n)$, i.e.\ 
\begin{equation*}
\tilde L\big(\bar x(t),\alpha_n\big)=-\alpha_n\cdot\lambda\sp * +
\tilde H\big(\bar x(t),\lambda\sp *\big).
\end{equation*}
We thereby have the lower bound
\begin{multline}\label{eq:Ldifflower}
\tilde L(x_n,\alpha_n)-\tilde L\big(\bar x(t),\alpha_n\big) \geq
-\alpha_n\cdot\lambda\sp * + \tilde H(x_n,\lambda\sp *) -
\Big(-\alpha_n\cdot\lambda\sp * + \tilde H(\bar x(t),\lambda\sp *
\Big)\\
 = \tilde H(x_n, \lambda\sp *)-\tilde H\big(x(t),\lambda\sp*\big) =
H(x_n, \lambda\sp *)- H\big(x(t),\lambda\sp*\big).
\end{multline}
It is straightforward to show that for each $\varepsilon > 0$, there
exists a $K$, such that every maximizer $\lambda\sp *$ in the
definition of $\tilde L$, \eqref{eq:Ltilde}, satisfies
\begin{equation}\label{eq:lambdastarbound}
|\lambda\sp *|\leq (C_3+1)e\sp{C_2 T}-1 + \varepsilon.
\end{equation}
When we combine the bound on $H$ in \eqref{eq:Hamiltonianestimates} with \eqref{eq:udifffin},
\eqref{eq:Ldifflower}, and \eqref{eq:lambdastarbound}, we obtain 
\begin{equation*}
\bar u(x_0,0)-u(x_0,0) \geq -\half C_1C_2(C_3+1)e\sp{C_2 T}T\Delta t.
\end{equation*}

\end{proof}

% Bibliography style.
\bibliographystyle{plain}

% Bib-file
\bibliography{references} 

%\begin{thebibliography}{99}
%\bibitem{young}
%L.C. Young, Lectures on the Calculus of Variations and Optimal Control Theory. 
%Saunders Co., Philadelphia-London-Toronto, Ont. 1969

%\end{thebibliography}
\end{document}